\documentclass[10pt,a4paper]{article}
\usepackage[a4paper,hmargin={1.5cm,1.5cm},vmargin={2cm,2cm}]{geometry}
\usepackage[utf8]{inputenc}
\usepackage[affil-sl]{authblk}

\usepackage[backend=biber,style=ieee,citestyle=numeric-comp,sorting=nty]{biblatex}
\usepackage[dvipsnames]{xcolor}
\usepackage[singlelinecheck=false]{caption}
\usepackage[linesnumbered,ruled]{algorithm2e}
\usepackage[colorlinks=true,linkcolor=Rhodamine,citecolor=Rhodamine]{hyperref}

\usepackage{url}
\usepackage{hyphenat}
\usepackage{parskip}

\usepackage{tabularray}

\usepackage{amsmath}
\usepackage{upgreek}
\usepackage{breqn}
\usepackage{mathtools}
\usepackage{amsfonts}
\usepackage{amscd}
\usepackage{amssymb}

\usepackage{tikz}
\usetikzlibrary{positioning,arrows.meta,fit}
\usepackage{graphicx}
\usepackage{pifont}
\usepackage{mathrsfs}
\usepackage{fancyhdr}
\usepackage{subfiles}

\usepackage{subfigure}

\usepackage{amsthm}
\theoremstyle{definition}
\newtheorem{theorem}{Theorem}
\newtheorem{definition}{Definition}
\newtheorem{remark}{Remark}
\newtheorem{corollary}{Corollary}
\newtheorem{lemma}{Lemma}

\newtheorem{example}{Example}

\definecolor{primary}{HTML}{6200ea}
\definecolor{secondary}{HTML}{00e5ff}

\begin{document}

\font\titleFont=cmr12 at 17pt
\title{{\titleFont A bipolar fuzzy relation equation framework for clinical decision support systems}}

\def\correspondingauthor{\footnote{Corresponding Author: a.ghodousian@ut.ac.ir (A. Ghodousian)}}

\author[1]{Amin Ghodousian \correspondingauthor{}}
\author[2]{Mohammad Sedigh Chopannavaz}

\affil[1]{Faculty of Engineering Science, College of Engineering, University of Tehran, P.O.Box 11365-4563, Tehran, Iran.}
\affil[2]{Department of Engineering Science, College of Engineering, University of Tehran, Tehran, Iran.}

\date{} 

\makeatletter
\renewenvironment{proof}[1][\proofname]{%
    \par\pushQED{\qed}\normalfont%
    \topsep6\p@\@plus6\p@\relax
    \trivlist\item[\hskip\labelsep\bfseries#1\@addpunct{.}]%
    \ignorespaces
}{%
    \popQED\endtrivlist\@endpefalse
}
\makeatother

\maketitle


\section*{Abstract}\label{sec_abs}
Clinical decision support frequently involves heterogeneous measurements, expert assessments, and guideline-based knowledge whose relationships to clinical decisions are gradual rather than purely binary. This paper develops a clinical decision-support framework based on bipolar fuzzy relational optimization, in which favorable and unfavorable clinical relationships are represented jointly through positive and negative fuzzy relational matrices together with a fuzzy requirement vector. The simultaneous participation of each decision variable and its complement in the relational constraints enables bipolar clinical information to be incorporated within a unified optimization model. The minimum \(t\)-norm is adopted to provide a noncompensatory, bottleneck-type interpretation of the interaction between fuzzy relational grades and decision levels. We characterize the feasible recommendation set by deriving clinical admissibility intervals and effective evidence activation sets, which provide the basis for feasibility analysis and systematic reduction of the relational system. The complete feasible set is then represented as a finite union of clinical recommendation regions associated with admissible Clinical Evidence-Assignment Functions. For a continuous objective function that is coordinatewise monotone with respect to the decision variables, a region-wise optimal candidate is constructed for each admissible evidence assignment, and comparison of the resulting finite collection of candidates yields a globally optimal recommendation. An algorithm integrating feasibility analysis, system reduction, region-wise optimization, and global selection is developed and illustrated through a numerical CDSS example. The results show that bipolar fuzzy relational optimization provides a structured and mathematically rigorous mechanism for transforming fuzzy clinical relational information into feasible and optimized decision-support recommendations.

\textbf{Keywords}: Bipolar fuzzy relational equations, Clinical decision support, Fuzzy relational optimization, Max--min composition, Global optimization, Decision making.


\section{Introduction}

Clinical decision-making is rarely based on information that is completely
certain, homogeneous, and binary. Patient measurements, expert assessments,
clinical guidelines, and treatment-related knowledge may all contribute to a
decision, but their relationships to the final recommendation are often gradual.
Fuzzy set theory provides a natural language for representing such graded
relationships, while fuzzy relational equations (FREs) provide a mathematical
mechanism for linking fuzzy inputs, relational knowledge, and required output
levels. When optimization is imposed over the solution set of an FRE, the
relational model becomes not only a representation of uncertain knowledge but
also a structured feasible region over which a preferable decision can be
identified.

The connection between fuzzy relational modelling and medicine is historically
well established. Sanchez~\cite{ref_39} introduced composite fuzzy relation
equations and applied them to medical diagnosis, where fuzzy relations were
used to connect patients with clinical propositions such as symptoms and
diagnoses. Umeyama~\cite{ref_49} subsequently considered complementary
diagnostic information through negated propositions. These early developments
are important for the present study because they show that fuzzy relational
models were motivated from the beginning by the need to represent uncertain
clinical knowledge rather than by purely abstract algebraic considerations.

The mathematical theory of FREs was subsequently developed in several
directions. Di Nola et al.~\cite{ref_06} provided a systematic treatment of
fuzzy relation equations for fuzzy modelling, and Pedrycz~\cite{ref_35}
studied generalized forms of FREs. Later investigations addressed solvability,
minimal and maximal solutions, structural characterizations, and computational
resolution under a variety of relational compositions
\cite{ref_02,ref_03,ref_28,ref_29,ref_37,ref_41,ref_43,ref_44}. A characteristic
feature of many FRE systems is that their feasible sets are non-convex and may
be described through extremal solutions or finite families of solution regions.
This structure is particularly relevant when an objective function is added,
because standard convex-optimization techniques are not generally applicable
directly.

Accordingly, a substantial literature has considered optimization problems
subject to fuzzy relational constraints. Linear optimization has been studied
for max--min, max--product, and more general max--$t$-norm compositions
\cite{ref_01,ref_11,ref_21,ref_22,ref_38,ref_42,ref_46}. Fang and
Li~\cite{ref_11}, for example, transformed a max--min FRE-constrained linear
problem into an integer-programming formulation, while Chang and
Shieh~\cite{ref_01} and Wu and Guu~\cite{ref_46} developed structural and
reduction results. Further extensions considered nonlinear objectives
\cite{ref_16,ref_17}, linear-fractional objectives~\cite{ref_48}, fuzzy
coefficients~\cite{ref_05}, interval-valued or intuitionistic fuzzy
information~\cite{ref_07}, and fuzzy relational inequalities
\cite{ref_14,ref_18,ref_19,ref_25,ref_52,ref_56}. Collectively, these studies
established FREs and FRIs as a bridge between fuzzy relational modelling and
optimization-based decision support.

A further conceptual step is required when a relational system contains two
opposing aspects. In many decision environments, the same decision component
may participate through a favorable relation in one direction and through an
unfavorable or complementary relation in the other. Bipolar representations
provide a systematic way to distinguish these aspects of information and
preference~\cite{ref_08,ref_09}. Within a bipolar fuzzy relational equation
(BFRE), this idea is expressed by allowing both a decision variable $x_j$ and
its complement $1-x_j$ to appear in the same relational constraint.

Freson, De Baets, and De Meyer~\cite{ref_12} introduced bipolar max--min
constraints in a linear optimization setting. Their formulation explicitly
coupled $x_j$ and $1-x_j$ and illustrated the model through degrees of
appreciation and disappreciation. The theory was subsequently developed for
max--min resolution~\cite{ref_26}, max--product compositions
\cite{ref_04,ref_58}, and max--\L{}ukasiewicz optimization and resolution
\cite{ref_27,ref_30,ref_50,ref_53}. These studies showed that the structure of
a BFRE feasible set depends strongly on the selected relational composition.
The framework was later extended to continuous strict $t$-norms~\cite{ref_60},
continuous Archimedean $t$-norms~\cite{ref_69}, and ultimately arbitrary
continuous $t$-norms~\cite{ref_63}. In particular, the generalized formulation
in~\cite{ref_63} considers nonlinear programming subject to
$
A^{+}\varphi x\vee A^{-}\varphi(\mathbf{1}-x)=b,
~ x\in[0,1]^n,
$
with $\varphi$ an arbitrary continuous $t$-norm, and establishes feasibility
conditions, a complete representation of the feasible set, simplification
rules, and a finite-candidate strategy for global optimization.

The broader application literature shows that fuzzy relational models are not
restricted to theoretical examples. FREs and FRIs have been applied to fault
diagnosis~\cite{Rotshtein2006}, image and video processing~\cite{Loia2005},
resource allocation~\cite{Xiao2019}, wireless communication management
\cite{Yang2016,Yang2018}, peer-to-peer systems~\cite{Yang2018a}, and
supply-chain decision-making~\cite{Lin2019,Lei2025}. The medical domain is
particularly natural because the original Sanchez framework already treated
clinical relations as fuzzy, and later medical fuzzy systems have continued to
use graded knowledge representations in computational decision support
\cite{HernandezJulio2019,Improta2020}. These developments motivate the use of
BFRE optimization as a structured mechanism for processing fuzzy clinical
relational information and generating feasible and preferred recommendations.

In a practical fuzzy decision-support setting, the relational coefficients used
by the BFRE model are naturally obtained after the relevant clinical information
has been expressed in fuzzy form. Quantitative measurements, linguistic
assessments, guideline-based knowledge, and expert judgments can be mapped by an
appropriate fuzzification or fuzzy knowledge-representation procedure into
grades in $[0,1]$. These grades populate the positive and negative fuzzy
relational matrices $A^{+}$ and $A^{-}$ and the fuzzy requirement vector $b$,
which therefore represent graded clinical relationships and requirements rather
than raw clinical measurements. This organization follows the usual fuzzy-system
principle in which membership-based representation provides the fuzzy
information subsequently processed by the reasoning or decision model
\cite{HernandezJulio2019,Improta2020}. In the present framework, the BFRE stage
uses these fuzzy relational inputs to characterize feasibility and determine an
optimal recommendation vector.

\subsection{BFRE formulation of the clinical decision-support problem}\label{sec-clinical-model}

Within the BFRE layer, let $T_1,\ldots,T_m$ denote candidate treatment
alternatives and $C_1,\ldots,C_n$ the clinical criteria entering the relational
decision model. The favorable and unfavorable fuzzy relationships between
treatment $T_i$ and criterion $C_j$ are represented by
$
A^{+}=\left(a_{ij}^{+}\right)_{m\times n}$,
$A^{-}=\left(a_{ij}^{-}\right)_{m\times n}.
$
Here $a_{ij}^{+}$ is the fuzzy degree of a positive or supportive relation and
$a_{ij}^{-}$ is the fuzzy degree of the corresponding negative, unfavorable,
or complementary relation.

The BFRE layer determines a vector
$
x=(x_1,\ldots,x_n)^T\in[0,1]^n,
$
where $x_j$ is the decision/recommendation level associated with criterion
$C_j$ in the relational model. Its complement
$1-x_j$ participates simultaneously in the bipolar constraint, giving the
standard bipolar decision structure of the BFRE.

For the clinical application developed here, the minimum $t$-norm is selected,
$
\varphi(u,v)=\min\{u,v\}.
$
The choice has a noncompensatory bottleneck interpretation: the effective
relational contribution cannot exceed either the fuzzy relationship grade or
the associated decision level. Thus
$
\varphi(a_{ij}^{+},x_j)=\min\{a_{ij}^{+},x_j\}$,
$\varphi(a_{ij}^{-},1-x_j)=\min\{a_{ij}^{-},1-x_j\}.
$
Let
$
b=(b_1,\ldots,b_m)^T\in[0,1]^m
$
denote the fuzzy vector of required relational degrees associated with the
candidate-treatment requirements. The resulting optimization problem is
\begin{equation}\label{eq_1}
    \begin{array}{ll}
        \min & f(x) \\[1mm]
        \text{s.t.}
        & \min (A^{+}, x)
        \vee
        \min (A^{-}, (\mathbf{1}-x))=b,\\
        & x\in[0,1]^n,
    \end{array}
\end{equation}
where $\mathbf{1}$ denotes the all-ones vector. The objective $f(x)$ expresses
how feasible recommendation configurations are compared. Depending on the
application, it may encode preference, risk, treatment burden, cost, resource
use, or another decision criterion.
The entries satisfy
$
0\le a_{ij}^{+}\le1$,
$0\le a_{ij}^{-}\le1$,
$0\le b_i\le1,
$
for every
$i\in\mathscr{I}=\{1,\ldots,m\}$ and
$j\in\mathscr{J}=\{1,\ldots,n\}$.
For each treatment alternative $T_i$, the corresponding relational equality in
Problem~\eqref{eq_1} is
\begin{equation}\label{eq-2}
    \max_{j=1}^{n}
    \left\{
    \max\left\{
    \min\left(a_{ij}^{+},x_j\right),
    \min\left(a_{ij}^{-},1-x_j\right)
    \right\}
    \right\}
    =b_i.
\end{equation}
The inner maximum expresses the bipolar character of criterion $C_j$: either
its positive branch or its complementary negative branch may provide the active
relational contribution. The outer maximum is the native max-composition of the
BFRE and identifies the strongest active criterion for the $i$-th relational
requirement. Equation~\eqref{eq-2} therefore should be read as a fuzzy
relational requirement rather than as a probabilistic accumulation of clinical
evidence. The equality requires the strongest admissible bipolar contribution
to attain exactly the prescribed fuzzy degree $b_i$.

The feasible recommendation set is
$
S(A^{+},A^{-},b)
=
\left\{
x\in[0,1]^n:
A^{+}\varphi x
\vee
A^{-}\varphi(\mathbf{1}-x)=b
\right\}.
$
Every element of this set is a relational decision configuration compatible
with all fuzzy requirements encoded by $A^{+}$, $A^{-}$, and $b$. The
optimization stage then selects the configuration preferred by $f$.

The contribution of the paper can therefore be summarized as follows. First,
for the max--min clinical BFRE we derive componentwise admissibility and
activation structures that permit systematic feasibility analysis. Second, the
complete feasible recommendation set is represented through a finite family of
regions induced by admissible Clinical Evidence-Assignment Functions. Third,
for continuous coordinatewise monotone objectives, a finite candidate set is
constructed and shown to contain a global optimum. Finally, these components are
assembled into an algorithm and demonstrated through a numerical CDSS example.

The remainder of the paper is organized as follows. Section~\ref{sec-2}
develops the feasibility analysis and introduces clinical admissibility
intervals and effective evidence activation sets. Section~\ref{sec-3}
characterizes the feasible recommendation set through admissible Clinical
Evidence-Assignment Functions. Section~\ref{sec-5} develops the local and
global optimization results and establishes the finite-candidate solution
procedure.

\section{Feasibility Analysis of the Clinical Decision Support System}\label{sec-2}

The feasibility analysis identifies the recommendation profiles that are
compatible with the clinical criteria represented in the CDSS. We first
consider the bipolar relation associated with a treatment alternative
$T_i$ and a clinical criterion $C_j$,
$
\max\left\{
\min\left(a_{ij}^{+},x_j\right),
\min\left(a_{ij}^{-},1-x_j\right)
\right\}=b_i,
~ i\in\mathscr{I},\ j\in\mathscr{J}.
$
For each pair $(i,j)$, the feasible values of $x_j$ are characterized
through the positive and negative clinical relationships between
$T_i$ and $C_j$. These pairwise characterizations are then combined
over the clinical criteria to determine the feasible recommendation
set of the complete CDSS model.

For each $i\in\mathscr{I}$, let $S_i$ denote the set of recommendation
vectors satisfying the clinical requirement associated with treatment
alternative $T_i$, that is,
$
S_i=
\left\{
x\in[0,1]^n:
\max_{j=1}^{n}
\left\{
\max\left\{
\min\left(a_{ij}^{+},x_j\right),
\min\left(a_{ij}^{-},1-x_j\right)
\right\}
\right\}
=b_i
\right\}.
$
Thus, $S_i$ collects all recommendation profiles for which the
bipolar clinical relationships associated with $T_i$ attain the
required degree $b_i$. Let $S(A^{+},A^{-},b)$ denote the feasible
recommendation set of Problem~(\ref{eq_1}). Then,
$
S(A^{+},A^{-},b)
=
\bigcap_{i\in\mathscr{I}}S_i.
$
Hence, a recommendation profile belongs to the feasible set of the
CDSS if and only if it satisfies the prescribed clinical requirements
for all treatment alternatives.

\begin{definition}\label{def-2}
	For each $i\in\mathscr{I}$ and each $j\in\mathscr{J}$, define
	$
	S_{ij}^{+}
	=
	\left\{
	x_j\in[0,1]:
	\min\left(a_{ij}^{+},x_j\right)=b_i
	\right\}$,
	$S_{ij}^{-}
	=
	\left\{
	x_j\in[0,1]:
	\min\left(a_{ij}^{-},1-x_j\right)=b_i
	\right\}.
	$
	The sets $S_{ij}^{+}$ and $S_{ij}^{-}$ are referred to as the
	positive and negative evidence activation sets, respectively. They
	contain the recommendation levels of criterion $C_j$ at which the
	corresponding positive or negative clinical relationship with
	treatment $T_i$ attains the required degree $b_i$.
	
	Furthermore, define
	$
	I_{ij}^{+}
	=
	\left\{
	x_j\in[0,1]:
	\min\left(a_{ij}^{+},x_j\right)\leq b_i
	\right\},
	~
	I_{ij}^{-}
	=
	\left\{
	x_j\in[0,1]:
	\min\left(a_{ij}^{-},1-x_j\right)\leq b_i
	\right\}.
	$
	The sets $I_{ij}^{+}$ and $I_{ij}^{-}$ are referred to as the
	positive and negative clinical admissibility sets, respectively.
	They contain the recommendation levels of criterion $C_j$ for which
	the corresponding positive or negative clinical relationship with
	treatment $T_i$ does not exceed the required degree $b_i$.
\end{definition}

According to Definition~\ref{def-2}, we immediately have
$
S_{ij}^{+}\subseteq I_{ij}^{+},
~
S_{ij}^{-}\subseteq I_{ij}^{-},
~
\forall i\in\mathscr{I},\ \forall j\in\mathscr{J}.
$
For the minimum $t$-norm, the sets introduced in
Definition~\ref{def-2} admit the following explicit forms:
\begin{equation}\label{eq-4}
	\begin{alignedat}{4}
		S_{ij}^{+}
		&=
		\begin{cases}
			\{b_i\}, & a_{ij}^{+}\geq b_i,\\
			\varnothing, & a_{ij}^{+}<b_i,
		\end{cases}
		&
		,~
		I_{ij}^{+}
		&=
		\begin{cases}
			[0,b_i], & a_{ij}^{+}\geq b_i,\\
			[0,1], & a_{ij}^{+}<b_i,
		\end{cases}
		&
		,~
		S_{ij}^{-}
		&=
		\begin{cases}
			\{1-b_i\}, & a_{ij}^{-}\geq b_i,\\
			\varnothing, & a_{ij}^{-}<b_i,
		\end{cases}
		&
		,~
		I_{ij}^{-}
		&=
		\begin{cases}
			[1-b_i,1], & a_{ij}^{-}\geq b_i,\\
			[0,1], & a_{ij}^{-}<b_i.
		\end{cases}
	\end{alignedat}
\end{equation}

\begin{definition}\label{def-3}
	For each $i\in\mathscr{I}$ and each $j\in\mathscr{J}$, define
	$
	S_{ij}
	=
	\left\{
	x_j\in[0,1]:
	\max\left\{
	\min\left(a_{ij}^{+},x_j\right),
	\min\left(a_{ij}^{-},1-x_j\right)
	\right\}
	=b_i
	\right\}
	$
	and
	$
	I_{ij}
	=
	\left\{
	x_j\in[0,1]:
	\max\left\{
	\min\left(a_{ij}^{+},x_j\right),
	\min\left(a_{ij}^{-},1-x_j\right)
	\right\}
	\leq b_i
	\right\}.
	$
	The set $S_{ij}$ is called the \emph{bipolar evidence activation set},
	as it identifies the recommendation levels of criterion $C_j$ at which
	the combined positive and negative clinical relationships associated
	with treatment $T_i$ attain the required degree $b_i$. The set $I_{ij}$
	is called the \emph{bipolar clinical admissibility set}; it contains the
	recommendation levels for which the combined bipolar relationship does
	not exceed the required degree $b_i$.
\end{definition}

The following lemma establishes the relationship between the bipolar
admissibility and activation sets introduced in Definition~\ref{def-3}
and their positive and negative counterparts in
Definition~\ref{def-2}. These relations will be used in the subsequent
characterization of the feasible recommendation levels.

\begin{lemma}\label{lm-2}
	For each $i\in\mathscr{I}$ and $j\in\mathscr{J}$;
	\begin{itemize}
		\item [\textbf{(a)}]
			The bipolar clinical admissibility set is given by
			$
			I_{ij}=I_{ij}^{+}\cap I_{ij}^{-}.
			$
			Thus, a recommendation level of criterion $C_j$ is admissible for the
			bipolar relationship with treatment $T_i$ if and only if it is
			admissible with respect to both its positive and negative clinical
			relationships.
		\item [\textbf{(b)}]
			The bipolar evidence activation set is given by
			$
			S_{ij}=I_{ij}\cap
			\left(S_{ij}^{+}\cup S_{ij}^{-}\right).
			$
			Thus, within the bipolar clinical admissibility set, a recommendation
			level activates the relationship when at least one of the positive or
			negative clinical relationships attains the required degree $b_i$.
	\end{itemize}
\end{lemma}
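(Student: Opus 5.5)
The proof is a direct set-theoretic unfolding of Definitions~\ref{def-2} and~\ref{def-3}. It relies only on two elementary properties of the maximum of two real numbers $u,v$. First, $\max\{u,v\}\le b$ holds if and only if $u\le b$ and $v\le b$. Second, $\max\{u,v\}=b$ holds if and only if $\max\{u,v\}\le b$ and at least one of $u=b$, $v=b$ holds. Throughout, fix $i\in\mathscr{I}$ and $j\in\mathscr{J}$, and abbreviate $u(x_j)=\min(a_{ij}^{+},x_j)$ and $v(x_j)=\min(a_{ij}^{-},1-x_j)$ for $x_j\in[0,1]$.

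For part (a), I would prove equality of sets pointwise. For $x_j\in[0,1]$, membership $x_j\in I_{ij}$ means $\max\{u(x_j),v(x_j)\}\le b_i$. By the first property this is equivalent to $u(x_j)\le b_i$ and $v(x_j)\le b_i$. By Definition~\ref{def-2}, these two conditions say exactly that $x_j\in I_{ij}^{+}$ and $x_j\in I_{ij}^{-}$, which gives $I_{ij}=I_{ij}^{+}\cap I_{ij}^{-}$.

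For part (b), I would again argue pointwise and verify both inclusions.

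\emph{Inclusion $S_{ij}\subseteq I_{ij}\cap(S_{ij}^{+}\cup S_{ij}^{-})$.} If $x_j\in S_{ij}$, then $\max\{u,v\}=b_i$, so in particular $\max\{u,v\}\le b_i$, hence $x_j\in I_{ij}$. Moreover, the maximum of two reals equals one of them, so either $u=b_i$ or $v=b_i$. That is, $x_j\in S_{ij}^{+}$ or $x_j\in S_{ij}^{-}$.

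\emph{Inclusion $I_{ij}\cap(S_{ij}^{+}\cup S_{ij}^{-})\subseteq S_{ij}$.} Suppose $x_j\in I_{ij}$ and, say, $x_j\in S_{ij}^{+}$; the case $x_j\in S_{ij}^{-}$ is symmetric. Then $u=b_i$, so $\max\{u,v\}\ge b_i$. Membership in $I_{ij}$ gives $\max\{u,v\}\le b_i$, and therefore $\max\{u,v\}=b_i$, i.e.\ $x_j\in S_{ij}$.

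There is no genuine obstacle here. The only point needing care is in (b): the intersection with $I_{ij}$ cannot be dropped. If, for instance, $x_j\in S_{ij}^{+}$ but $v(x_j)>b_i$, then $x_j\notin S_{ij}$. The role of $I_{ij}$ is precisely to supply the upper bound $\max\{u,v\}\le b_i$ in the reverse inclusion.

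Optionally, I could cross-check the result against the explicit forms~\eqref{eq-4}. In particular, (a) yields $I_{ij}$ as an intersection of the intervals $[0,b_i]$ or $[0,1]$ with $[1-b_i,1]$ or $[0,1]$. However, the proof itself does not need~\eqref{eq-4}: it holds for the minimum $t$-norm, and in fact for any $t$-norm $\varphi$.
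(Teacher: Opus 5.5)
Your proof is correct and follows essentially the same route as the paper: both unfold Definitions~\ref{def-2} and~\ref{def-3} pointwise. The key facts in both are that $\max\{u,v\}\le b_i$ holds exactly when $u\le b_i$ and $v\le b_i$, and that equality $\max\{u,v\}=b_i$ additionally requires $u=b_i$ or $v=b_i$. The only differences are that you write part (b) as two explicit inclusions rather than the paper's single chain of equivalences, and that you (sensibly) do not rely on the explicit forms~\eqref{eq-4}.
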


\begin{proof}
	\textbf{(a)}
	The result follows directly from Definitions~\ref{def-2} and
	\ref{def-3}, since the Bipolar Clinical Admissibility Set is obtained
	by simultaneously satisfying the positive and negative admissibility
	conditions.
	\textbf{(b)}
	By Definition~\ref{def-3}, a recommendation level $x_j$ belongs to
	$S_{ij}$ if and only if
	$
	\min\left(a_{ij}^{+},x_j\right)\leq b_i
	$
	and
	$
	\min\left(a_{ij}^{-},1-x_j\right)\leq b_i,
	$
	with at least one of these two inequalities holding with equality.
	The two inequalities are equivalent to
	$
	x_j\in I_{ij}^{+}\cap I_{ij}^{-}=I_{ij}.
	$
	Moreover,
	$
	\min\left(a_{ij}^{+},x_j\right)=b_i
	$
	holds if and only if $x_j\in S_{ij}^{+}$, while
	$
	\min\left(a_{ij}^{-},1-x_j\right)=b_i
	$
	holds if and only if $x_j\in S_{ij}^{-}$.
	Therefore, at least one of the two bipolar relationships attains the
	required clinical degree $b_i$ if and only if
	$
	x_j\in S_{ij}^{+}\cup S_{ij}^{-}.
	$
	Combining the admissibility requirement with this attainment condition
	gives
	$
	S_{ij}
	=
	I_{ij}\cap
	\left(S_{ij}^{+}\cup S_{ij}^{-}\right).
	$
	Hence, the Bipolar Evidence Activation Set consists precisely of those
	recommendation levels that remain clinically admissible and for which
	the required degree $b_i$ is attained through either the positive or
	the negative clinical relationship.
\end{proof}

Based on~(\ref{eq-4}) and Lemma~\ref{lm-2}, the bipolar clinical
admissibility and activation sets can be explicitly characterized
according to the relative strengths of the positive and negative
clinical relationships. The following corollary summarizes the four
possible cases.

\begin{corollary}\label{corl-1}
	Suppose that $i\in\mathscr{I}$ and $j\in\mathscr{J}$. Then the
	bipolar clinical admissibility set $I_{ij}$ and the bipolar evidence
	activation set $S_{ij}$ are given by the following cases:
	\begin{itemize}
		\item[\textbf{(a)}]
		If $a_{ij}^{+}<b_i$ and $a_{ij}^{-}<b_i$, then
		$
		I_{ij}=[0,1],
		~
		S_{ij}=\varnothing.
		$
		Thus, neither the positive nor the negative clinical relationship
		associated with $T_i$ and $C_j$ is sufficiently strong to attain the
		required clinical degree $b_i$.
		\item[\textbf{(b)}]
		If $a_{ij}^{+}\geq b_i$ and $a_{ij}^{-}<b_i$, then
		$
		I_{ij}=[0,b_i],
		~
		S_{ij}=\{b_i\}.
		$
		In this case, the positive clinical relationship can attain the
		required degree, while the negative relationship cannot. Consequently,
		the admissible recommendation levels are bounded above by $b_i$.
		\item[\textbf{(c)}]
		If $a_{ij}^{-}\geq b_i$ and $a_{ij}^{+}<b_i$, then
		$
		I_{ij}=[1-b_i,1],
		~
		S_{ij}=\{1-b_i\}.
		$
		Here, only the negative clinical relationship can attain the required
		degree. Hence, the admissible recommendation levels are bounded below
		by $1-b_i$.
		\item[\textbf{(d)}]
		If $a_{ij}^{+}\geq b_i$ and $a_{ij}^{-}\geq b_i$, then
		$
		I_{ij}=[1-b_i,b_i],
		$
		and
		$
		S_{ij}
		=
		\begin{cases}
			\{1-b_i\}\cup\{b_i\}, & b_i>\dfrac{1}{2}\\
			[1-b_i,b_i], & b_i\leq\dfrac{1}{2}
		\end{cases}
		$.
		In this case, both clinical relationships are sufficiently strong to
		attain the required degree. The resulting admissible range is
		determined jointly by the positive and negative relationships, while
		the activation set depends on the relative position of the two
		activation levels $1-b_i$ and $b_i$.
	\end{itemize}
\end{corollary}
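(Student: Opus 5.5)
The plan is to read off each case directly from the explicit forms in~(\ref{eq-4}) and combine them through Lemma~\ref{lm-2}. Part~(a) of that lemma gives $I_{ij}=I_{ij}^{+}\cap I_{ij}^{-}$, and part~(b) gives $S_{ij}=I_{ij}\cap(S_{ij}^{+}\cup S_{ij}^{-})$. The four cases of the corollary correspond exactly to the four sign patterns of $a_{ij}^{+}-b_i$ and $a_{ij}^{-}-b_i$. In each case I substitute the corresponding branches of~(\ref{eq-4}) and compute first the intersection of two intervals and then the intersection of that interval with a set of at most two points.

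\textbf{Cases (a)--(c).} In case~(a), both admissibility sets equal $[0,1]$ and both activation sets are empty. Hence $I_{ij}=[0,1]$, and $S_{ij}=[0,1]\cap\varnothing=\varnothing$. In case~(b), we have $I_{ij}=[0,b_i]\cap[0,1]=[0,b_i]$ and $S_{ij}=[0,b_i]\cap(\{b_i\}\cup\varnothing)=\{b_i\}$, because $b_i\in[0,b_i]$. Case~(c) is symmetric: $I_{ij}=[0,1]\cap[1-b_i,1]=[1-b_i,1]$, and $S_{ij}=[1-b_i,1]\cap\{1-b_i\}=\{1-b_i\}$.

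\textbf{Case (d).} This case needs some care. Here $I_{ij}=[0,b_i]\cap[1-b_i,1]=[1-b_i,b_i]$, where an interval with left endpoint exceeding its right endpoint is understood to be empty. Next, $S_{ij}=[1-b_i,b_i]\cap\{1-b_i,b_i\}$, and I split on the value of $b_i$.
\begin{itemize}
\item If $b_i>\tfrac12$, both points $1-b_i<b_i$ are endpoints of a nondegenerate interval. They therefore both lie in it, giving $S_{ij}=\{1-b_i\}\cup\{b_i\}$.
\item If $b_i=\tfrac12$, the interval collapses to $\{\tfrac12\}$, so $S_{ij}=\{\tfrac12\}=[1-b_i,b_i]$.
\item If $b_i<\tfrac12$, then $1-b_i>b_i$, so $I_{ij}=\varnothing$ and hence $S_{ij}=\varnothing=[1-b_i,b_i]$ under the same convention.
\end{itemize}
So the second branch of the formula for $S_{ij}$ in case~(d) holds exactly when $[1-b_i,b_i]$ is read as empty for $b_i<\tfrac12$. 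As a sanity check, I will verify this directly from the original equation: for $b_i<\tfrac12$, every $x_j$ has $\max\{x_j,1-x_j\}\ge\tfrac12>b_i$. Since both $a_{ij}^{\pm}\ge b_i$, one of $\min(a_{ij}^{+},x_j)$ and $\min(a_{ij}^{-},1-x_j)$ then exceeds $b_i$, so no level is admissible.

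The only real obstacle is this degenerate subcase of~(d), where the interval notation must be interpreted correctly. I will state the empty-interval convention explicitly before the case analysis, so that the formulas in~(d) are literally correct for all $b_i\in[0,1]$. Everything else is a routine intersection of the sets given in~(\ref{eq-4}).
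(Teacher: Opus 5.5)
Your route is exactly the paper's. There the corollary is justified only by the remark that it follows from \eqref{eq-4} and Lemma~\ref{lm-2}. Your case-by-case substitution, with the empty-interval convention made explicit for $b_i<\tfrac12$ in (d), is a more careful version of that argument.

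The genuine gap is the one input you take on trust: \eqref{eq-4} is wrong whenever $a_{ij}^{+}=b_i<1$ or $a_{ij}^{-}=b_i<1$. If $a_{ij}^{+}=b_i$, then $\min(a_{ij}^{+},x_j)\le b_i$ for every $x_j$, with equality iff $x_j\ge b_i$. Hence $I_{ij}^{+}=[0,1]$ and $S_{ij}^{+}=[b_i,1]$, not $[0,b_i]$ and $\{b_i\}$. Symmetrically, $a_{ij}^{-}=b_i$ gives $I_{ij}^{-}=[0,1]$ and $S_{ij}^{-}=[0,1-b_i]$. Because Lemma~\ref{lm-2} is sound, the error propagates, and the corollary itself is false on these boundaries. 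The paper's own Example~\ref{ex-1} shows this. The entry $a_{13}^{+}=b_1=0.6$, $a_{13}^{-}=0.4$ is covered by case (b), which predicts $I_{13}=[0,0.6]$ and $S_{13}=\{0.6\}$; the example correctly finds $I_{13}=[0,1]$ and $S_{13}=[0.6,1]$. Likewise, $a_{14}^{-}=b_1=0.6$, $a_{14}^{+}=0.4$ falls under (c), yet Tables~\ref{t-1} and~\ref{t-2} list $I_{14}=[0,1]$ and $S_{14}=[0,0.4]$.

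Your sanity check in (d) is exactly where this should have surfaced. From $x_j\ge\tfrac12>b_i$ you may conclude $\min(a_{ij}^{+},x_j)>b_i$ only when $a_{ij}^{+}>b_i$. For $a_{ij}^{+}=a_{ij}^{-}=b_i=0.3$, every $x_j\in[0,1]$ satisfies $\max\{\min(0.3,x_j),\min(0.3,1-x_j)\}=0.3$, so $I_{ij}=S_{ij}=[0,1]$ rather than $\varnothing$.

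To repair the proof, derive $I_{ij}^{\pm}$ and $S_{ij}^{\pm}$ directly from Definition~\ref{def-2}, with a three-way split $a_{ij}^{\pm}>b_i$, $a_{ij}^{\pm}=b_i$, $a_{ij}^{\pm}<b_i$. Your argument then proves (a)--(d) verbatim whenever each hypothesis $a_{ij}^{\pm}\ge b_i$ is strict or $b_i=1$; at $b_i=1$ the boundary formulas coincide with \eqref{eq-4}. The remaining boundary cases must be stated separately. For instance, in (b) with $a_{ij}^{+}=b_i<1$ one gets $I_{ij}=[0,1]$ and $S_{ij}=[b_i,1]$. In (c) with $a_{ij}^{-}=b_i<1$ one gets $I_{ij}=[0,1]$ and $S_{ij}=[0,1-b_i]$.
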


\begin{definition}\label{def-4}
	For each $j\in\mathscr{J}$, define the positive and negative eligibility
	index sets by
	$
	I^{+}(j)
	=
	\left\{
	i\in\mathscr{I}:a_{ij}^{+}\geq b_i
	\right\}$,
	$
	I^{-}(j)
	=
	\left\{
	i\in\mathscr{I}:a_{ij}^{-}\geq b_i
	\right\}.
	$
	Thus, $I^{+}(j)$ and $I^{-}(j)$ identify the treatments for which the
	positive and negative clinical relationships of criterion $C_j$,
	respectively, are sufficiently strong to attain the required clinical
	degree.
	
	For each $j\in\mathscr{J}$, define the clinical admissibility interval
	of $C_j$ by
	$
	I_j=\bigcap_{i\in\mathscr{I}}I_{ij}.
	$
	Finally, for each $i\in\mathscr{I}$ and $j\in\mathscr{J}$, define the
	effective evidence activation set by
	$
	S_{ij}^{\prime}=S_{ij}\cap I_j.
	$
	Thus, $S_{ij}^{\prime}$ contains the recommendation levels at which the
	bipolar relationship between $C_j$ and $T_i$ can be activated while
	remaining admissible with respect to all clinical requirements.
\end{definition}

\begin{remark}\label{rmk-1}
	By Corollary~\ref{corl-1} and Definition~\ref{def-4}, the admissible
	recommendation levels of each clinical criterion $C_j$ form an interval
	$
	I_j=[L_j,U_j], ~ \forall j\in\mathscr{J},
	$
	where
	\[
	L_j=
	\begin{cases}
		\displaystyle\max_{i\in I^{-}(j)}\{1-b_i\},
		& I^{-}(j)\neq\varnothing,\\[2mm]
		0,
		& I^{-}(j)=\varnothing,
	\end{cases}
	~
	U_j=
	\begin{cases}
		\displaystyle\min_{i\in I^{+}(j)}\{b_i\},
		& I^{+}(j)\neq\varnothing,\\[2mm]
		1,
		& I^{+}(j)=\varnothing.
	\end{cases}
	\]
	Thus, $L_j$ and $U_j$ represent the lower and upper admissible bounds
	on the recommendation level $x_j$ imposed by all clinical
	requirements involving criterion $C_j$. In particular, the lower bound
	is determined by the strongest relevant negative restriction, whereas
	the upper bound is determined by the strongest relevant positive
	restriction.
	
	Furthermore, for each $i\in\mathscr{I}$ and $j\in\mathscr{J}$ such that
	$S_{ij}^{\prime}\neq\varnothing$, the effective evidence activation set
	$S_{ij}^{\prime}=S_{ij}\cap I_j$ is given by
	\[
	S_{ij}^{\prime}
	=
	\begin{cases}
		\left[
		\max\{L_j,b_i\},
		\min\{U_j,b_i\}
		\right],
		&
		a_{ij}^{+}\geq b_i,~ a_{ij}^{-}<b_i,
		\\[2mm]
		\left[
		\max\{L_j,1-b_i\},
		\min\{U_j,1-b_i\}
		\right],
		&
		a_{ij}^{+}<b_i,~ a_{ij}^{-}\geq b_i,
		\\[2mm]
		\left[
		\max\{L_j,1-b_i\},
		\min\{U_j,1-b_i\}
		\right]
		\cup
		\left[
		\max\{L_j,b_i\},
		\min\{U_j,b_i\}
		\right],
		&
		a_{ij}^{+}\geq b_i,~
		a_{ij}^{-}\geq b_i,~
		b_i>\dfrac{1}{2},
		\\[2mm]
		\left[
		\max\{L_j,1-b_i\},
		\min\{U_j,b_i\}
		\right],
		&
		a_{ij}^{+}\geq b_i,~
		a_{ij}^{-}\geq b_i,~
		b_i \leq \dfrac{1}{2}.
	\end{cases}
	\]
	Hence, $S_{ij}^{\prime}$ identifies the recommendation levels at which
	criterion $C_j$ can activate the bipolar clinical relationship with
	treatment $T_i$ while remaining compatible with the admissible
	requirements imposed by all treatments.
\end{remark}

\begin{example}\label{ex-1}
	Consider the Clinical Decision Support System described by
	Problem~(\ref{eq_1}), with the following fuzzy relational data. The
		positive and negative clinical relationship matrices are given by
	\[
	A^{+}=
	\begin{bmatrix}
		0.80 & 0.30 & 0.60 & 0.40 & 0.50\\
		0.90 & 0.10 & 0.40 & 0.10 & 0.30\\
		0.70 & 0.70 & 0.70 & 0.40 & 0.80\\
		0.80 & 0.70 & 0.30 & 0.50 & 0.50\\
		0.10 & 0.10 & 0.20 & 0.40 & 0.40
	\end{bmatrix},~
	A^{-}=
	\begin{bmatrix}
		0.30 & 0.50 & 0.40 & 0.60 & 0.40\\
		0.10 & 0.30 & 0.90 & 0.30 & 0.10\\
		0.40 & 0.50 & 0.30 & 0.80 & 0.50\\
		0.10 & 0.40 & 0.80 & 0.60 & 0.30\\
		0.80 & 0.30 & 0.40 & 0.20 & 0.70
	\end{bmatrix},~
	b=
	\begin{bmatrix}
		0.60\\
		0.50\\
		0.80\\
		0.70\\
		0.60
	\end{bmatrix}
	\]
	Here, $\mathscr{I}=\{1,2,3,4,5\}$ represents the set of treatment
	alternatives and $\mathscr{J}=\{1,2,3,4,5\}$ represents the set of
	clinical criteria. The $i$-th component $b_i$ specifies the required
	clinical degree associated with treatment alternative $T_i$.
	In this CDSS model, the minimum $t$-norm is employed as the clinical
	aggregation operator. Hence,
	$
	\varphi(u,v)=\min\{u,v\},
	$
	and the $i$-th clinical requirement takes the form
	$
	\max_{j\in\mathscr{J}}
	\left\{
	\max\left\{
	\min(a_{ij}^{+},x_j),
	\min(a_{ij}^{-},1-x_j)
	\right\}
	\right\}
	=b_i.
	$
	To illustrate the construction of the sets introduced above, consider
	the first treatment alternative $T_1$ and the third clinical criterion
	$C_3$. We have
	$
	a_{13}^{+}=0.60,~
	a_{13}^{-}=0.40,~
	b_1=0.60.
	$
	Therefore, the admissibility conditions are
	$
	\min(0.60,x_3)\leq0.60
	$
	and
	$
	\min(0.40,1-x_3)\leq0.60.
	$
	Both inequalities hold for every $x_3\in[0,1]$, and consequently
	$
	I_{13}=[0,1].
	$
	For the corresponding bipolar evidence activation condition, we
	require
	$
	\max\{\min(0.60,x_3),\min(0.40,1-x_3)\}=0.60.
	$
	Since the negative relationship can never attain $0.60$, the equality
	must be attained through the positive relationship. This occurs when
	$
	\min(0.60,x_3)=0.60,
	$
	which gives
	$
	x_3\in[0.60,1].
	$
	Hence,
	$
	S_{13}=[0.60,1].
	$

\end{example}
\section{Characterization of the Clinical Recommendation Solution Set}
\label{sec-3}

The BFRE-constrained clinical decision-support model introduced in
Section \ref{sec-2} identifies the set of recommendation vectors that are
consistent with the prescribed clinical requirements. Before optimizing the
clinical recommendation objective, it is therefore necessary to characterize
the complete feasible region of the BFRE system. This characterization
determines the admissible recommendation levels of the clinical criteria and
provides the mathematical foundation for identifying clinically feasible
recommendation vectors.

For each treatment alternative $T_i$ and clinical criterion $C_j$, the
positive and negative clinical relationships give rise to the corresponding
Positive Evidence Activation Set $S_{ij}^{+}$ and Negative Evidence
Activation Set $S_{ij}^{-}$. Similarly, the associated Positive and Negative
Clinical Admissibility Sets, denoted by $I_{ij}^{+}$ and $I_{ij}^{-}$,
determine the recommendation levels that do not violate the required clinical
degree $b_i$. By combining these sets, we obtain the Bipolar Evidence
Activation Set $S_{ij}$ and the Bipolar Clinical Admissibility Set $I_{ij}$.

The purpose of this section is to use these componentwise sets to construct
the admissible interval for each clinical criterion and, subsequently, to
characterize the Clinical Requirement Satisfaction Set $S_i$ and the complete
Clinical Recommendation Solution Set
$S(A^{+},A^{-},b)$. The resulting characterization provides an explicit
description of all recommendation vectors that satisfy the bipolar clinical
requirements and will be used in the subsequent resolution and optimization
steps.

The following lemma provides two necessary conditions for the feasibility of
Problem~(\ref{eq_1}). The first condition states that the Clinical
Admissibility Interval associated with every clinical criterion must be
nonempty. The second condition ensures that, for each clinical requirement,
at least one clinical criterion can actively contribute to its satisfaction
within the corresponding admissible interval.

\begin{lemma}\label{lm-3}
	Suppose that
	$S\left(A^{+},A^{-},b\right)\neq\varnothing$. Then the following necessary
	conditions hold:
	\begin{itemize}
		\item [\textbf{(a)}]
			The Clinical Admissibility Interval of every clinical criterion
			is nonempty; that is,
			$
			I_j\neq\varnothing,~ \forall j\in\mathscr{J}.
			$
		\item [\textbf{(b)}]
			For every clinical requirement $i\in\mathscr{I}$, there exists
			at least one clinical criterion $j_i\in\mathscr{J}$ such that its Effective
			Evidence Activation Set is nonempty; that is,
			$
			S_{ij_i}^{\prime}\neq\varnothing.
			$
	\end{itemize}
\end{lemma}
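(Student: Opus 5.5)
The plan is to fix a feasible recommendation vector $x\in S(A^{+},A^{-},b)$ and show that its components witness both conditions. The key step is that, because $x\in S_i$ for every $i\in\mathscr{I}$, each term inside the outer maximum of \eqref{eq-2} is bounded above by $b_i$. That is, for every $i\in\mathscr{I}$ and $j\in\mathscr{J}$,
\[
\max\left\{\min\left(a_{ij}^{+},x_j\right),\min\left(a_{ij}^{-},1-x_j\right)\right\}\leq b_i .
\]
By Definition~\ref{def-3}, this says exactly that $x_j\in I_{ij}$ for all $i$ and $j$.

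\textbf{(a)} Fix $j\in\mathscr{J}$. The previous observation holds for every $i\in\mathscr{I}$, so
\[
x_j\in\bigcap_{i\in\mathscr{I}}I_{ij}=I_j
\]
by Definition~\ref{def-4}. Hence $I_j\neq\varnothing$. Equivalently, in the language of Remark~\ref{rmk-1}, $L_j\le x_j\le U_j$.

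\textbf{(b)} Fix $i\in\mathscr{I}$. The outer maximum in \eqref{eq-2} is taken over the finite set $\mathscr{J}$, so it is attained at some index $j_i$, and for that index
\[
\max\left\{\min\left(a_{ij_i}^{+},x_{j_i}\right),\min\left(a_{ij_i}^{-},1-x_{j_i}\right)\right\}=b_i .
\]
Therefore $x_{j_i}\in S_{ij_i}$ by Definition~\ref{def-3}. Part (a) gives $x_{j_i}\in I_{j_i}$, so
\[
x_{j_i}\in S_{ij_i}\cap I_{j_i}=S_{ij_i}^{\prime}\neq\varnothing .
\]

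There is no real obstacle here. The only point that needs care is that the same vector $x$ is used throughout. The activating level $x_{j_i}$ in (b) must lie in the admissibility interval $I_{j_i}$ determined by \emph{all} requirements, not merely in $I_{ij_i}$. This is exactly why part (a) is proved first and then reused in part (b).
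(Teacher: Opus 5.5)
Your proof is correct and uses the same reasoning as the paper. A feasible $x$ satisfies $x_j\in I_{ij}$ for all $i,j$, which gives (a), and the outer maximum in \eqref{eq-2} is attained at some $j_i$ with $x_{j_i}\in S_{ij_i}\cap I_{j_i}=S_{ij_i}^{\prime}$, which gives (b); the paper only phrases both parts as proofs by contradiction, while you exhibit the witness $x$ directly.
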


\begin{proof}
	\textbf{(a)}
	Suppose, on the contrary, that
	$x\in S\left(A^{+},A^{-},b\right)$ but
	$I_{j_{0}}=\varnothing$ for some $j_{0}\in\mathscr{J}$.
	Since
	$
	I_{j_{0}}=\bigcap_{i\in\mathscr{I}}I_{ij_{0}},
	$
	the emptiness of $I_{j_{0}}$ means that at least one clinical
	requirement cannot be satisfied within the admissible range of the
	recommendation level $x_{j_{0}}$. Thus, there exists
	$i_{0}\in\mathscr{I}$ such that
	$
	x_{j_{0}}\notin I_{i_{0}j_{0}}.
	$
	By Definition~\ref{def-3}, this implies
	$
	\max\left\{
	\min\left(a_{i_{0}j_{0}}^{+},x_{j_{0}}\right),
	\min\left(a_{i_{0}j_{0}}^{-},1-x_{j_{0}}\right)
	\right\}>b_{i_{0}}.
	$
	Hence, the $j_{0}$-th clinical criterion violates the admissibility
	condition associated with the $i_{0}$-th clinical requirement, and
	therefore $x\notin S_{i_{0}}$. This contradicts
	$
	S\left(A^{+},A^{-},b\right)
	=
	\bigcap_{i\in\mathscr{I}}S_i
	$
	and the assumption that
	$x\in S\left(A^{+},A^{-},b\right)$.
	Therefore,
	$
	I_j\neq\varnothing,~ \forall j\in\mathscr{J}.
	$
	\textbf{(b)}
	Suppose, on the contrary, that there exists an
	$i_{0}\in\mathscr{I}$ for which
	$
	S_{i_{0}j}^{\prime}=\varnothing,
	~ \forall j\in\mathscr{J}.
	$
	Since
	$x\in S\left(A^{+},A^{-},b\right)$, we have
	$x\in S_{i_{0}}$. Moreover, part~\textbf{(a)} ensures that
	$I_j\neq\varnothing$ for every $j\in\mathscr{J}$, and feasibility of
	$x$ requires
	$
	x_j\in I_j,~ \forall j\in\mathscr{J}.
	$
	Now, because
	$
	S_{i_{0}j}^{\prime}
	=
	S_{i_{0}j}\cap I_j
	=
	\varnothing,
	~ \forall j\in\mathscr{J},
	$
	and $x_j\in I_j$, we necessarily have
	$
	x_j\notin S_{i_{0}j},
	~ \forall j\in\mathscr{J}.
	$
	Thus, for every clinical criterion $C_j$, the corresponding bipolar
	clinical relationship fails to attain the required clinical degree
	$b_{i_{0}}$. In particular,
	$
	\max\left\{
	\min\left(a_{i_{0}j}^{+},x_j\right),
	\min\left(a_{i_{0}j}^{-},1-x_j\right)
	\right\}<b_{i_{0}},
	~ \forall j\in\mathscr{J}.
	$
	Consequently, no clinical criterion can actively contribute to
	satisfying the $i_{0}$-th clinical requirement at the prescribed degree
	$b_{i_{0}}$. Hence,
	$x\notin S_{i_{0}}$, which contradicts
	$x\in S\left(A^{+},A^{-},b\right)$.
	Therefore, for every $i\in\mathscr{I}$, there exists at least one
	$j_i\in\mathscr{J}$ such that
	$
	S_{ij_i}^{\prime}\neq\varnothing.
	$
\end{proof}

The following lemma provides a necessary and sufficient condition for
determining whether a given clinical recommendation vector
$x\in[0,1]^n$ is feasible for Problem~(\ref{eq_1}). In particular, it
shows that feasibility can be verified through two conditions: each
clinical criterion must have an admissible recommendation level, and
each treatment requirement must have at least one clinical criterion
capable of attaining its prescribed clinical degree.

\begin{lemma}\label{lm-4}
	A vector $x\in[0,1]^n$ belongs to the Clinical Recommendation Solution
	Set $S\left(A^{+},A^{-},b\right)$ if and only if the following
	conditions hold:
	\begin{itemize}
		\item[\textbf{(I)}]
		$
		x_j\in I_j,~ \forall j\in\mathscr{J}.
		$
		That is, the recommendation level assigned to every clinical
		criterion lies within its Clinical Admissibility Interval.
		
		\item[\textbf{(II)}] For every treatment alternative
		$i\in\mathscr{I}$, there exists at least one clinical criterion
		$j_i\in\mathscr{J}$ such that
		$
		x_{j_i}\in S_{ij_i}^{\prime}.
		$
		Thus, for each treatment requirement, at least one clinically
		admissible criterion must actively attain the required clinical
		degree $b_i$.
	\end{itemize}
\end{lemma}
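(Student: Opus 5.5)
We prove the two directions separately, working throughout with the row-wise decomposition $S(A^{+},A^{-},b)=\bigcap_{i\in\mathscr{I}}S_i$ and the componentwise sets of Definitions~\ref{def-3} and~\ref{def-4}. The key observation is that membership of $x$ in $S_i$ is equivalent to two statements. First, every term
\[
\max\{\min(a_{ij}^{+},x_j),\min(a_{ij}^{-},1-x_j)\}
\]
is at most $b_i$, that is, $x_j\in I_{ij}$ for all $j$. Second, at least one such term equals $b_i$, that is, $x_{j}\in S_{ij}$ for some $j$. This is simply the characterization of a maximum of finitely many numbers being equal to $b_i$.

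\textbf{Necessity.} Let $x\in S(A^{+},A^{-},b)$. For each $i\in\mathscr{I}$ we have $x\in S_i$, so the $j$-th term of the $i$-th equation is bounded by the maximum, which equals $b_i$. Hence $x_j\in I_{ij}$ for every $i$ and $j$, and therefore $x_j\in\bigcap_{i}I_{ij}=I_j$, which gives (I). The maximum over the finite index set $\mathscr{J}$ is attained, so for each $i$ there is some $j_i$ with $x_{j_i}\in S_{ij_i}$. Combining this with (I) gives $x_{j_i}\in S_{ij_i}\cap I_{j_i}=S_{ij_i}^{\prime}$, which is (II). This is essentially the argument of Lemma~\ref{lm-3}, applied to the specific vector $x$ rather than used only to show that the sets are nonempty.

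\textbf{Sufficiency.} Assume (I) and (II) hold and fix $i\in\mathscr{I}$. By (I), for every $j$ we have $x_j\in I_j\subseteq I_{ij}$, so every term in the $i$-th equation is at most $b_i$. By (II), $x_{j_i}\in S_{ij_i}^{\prime}\subseteq S_{ij_i}$, so the $j_i$-th term equals $b_i$ exactly. The maximum in~(\ref{eq-2}) is therefore exactly $b_i$, so $x\in S_i$. Since $i$ was arbitrary, $x\in\bigcap_i S_i=S(A^{+},A^{-},b)$.

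There is no substantial obstacle. The only point needing care is that the maximum over $\mathscr{J}$ is attained because $\mathscr{J}$ is finite; this is what lets us extract the index $j_i$ in the necessity direction. We should also remember that $S_{ij}^{\prime}$ is intersected with $I_j$, so (II) on its own already contains part of (I), and the two conditions are used separately only for clarity.
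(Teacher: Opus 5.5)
Your proposal is correct and follows essentially the same route as the paper's proof. Sufficiency uses $I_j\subseteq I_{ij}$ to bound every term by $b_i$ and $S_{ij_i}^{\prime}\subseteq S_{ij_i}$ to supply a term equal to $b_i$. Necessity bounds each term by the row maximum to obtain (I), then extracts an index attaining $b_i$ and intersects with $I_{j_i}$ to obtain (II).
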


\begin{proof}
	First, suppose that $x\in[0,1]^n$ satisfies conditions
	\textbf{(I)} and \textbf{(II)}.
	From condition~\textbf{(I)}, we have
	$x_j\in I_j$ for every $j\in\mathscr{J}$. Since
	$
	I_j=\bigcap_{i\in\mathscr{I}}I_{ij},
	$
	it follows that
	$x_j\in I_{ij}$ for every $i\in\mathscr{I}$ and
	$j\in\mathscr{J}$. Hence, by Definition~\ref{def-3},
	$
	\max\left\{
	\min\left(a_{ij}^{+},x_j\right),
	\min\left(a_{ij}^{-},1-x_j\right)
	\right\}
	\leq b_i,
	~
	\forall i\in\mathscr{I},\ \forall j\in\mathscr{J}.
	$
	Thus, every clinical criterion operates within the admissible range
	for each treatment requirement.
	On the other hand, condition~\textbf{(II)} states that, for each
	$i\in\mathscr{I}$, there exists at least one clinical criterion
	$j_i\in\mathscr{J}$ such that
	$x_{j_i}\in S_{ij_i}^{\prime}$. Since
	$S_{ij_i}^{\prime}\subseteq S_{ij_i}$, Definition~\ref{def-3} gives
	$
	\max\left\{
	\min\left(a_{ij_i}^{+},x_{j_i}\right),
	\min\left(a_{ij_i}^{-},1-x_{j_i}\right)
	\right\}
	=b_i.
	$
	Therefore, for each treatment requirement $i$, all clinical
	criteria produce values that do not exceed the required degree
	$b_i$, while at least one criterion attains this degree exactly.
	Consequently,
	$
	\max_{j\in\mathscr{J}}
	\left\{
	\max\left[
	\min\left(a_{ij}^{+},x_j\right),
	\min\left(a_{ij}^{-},1-x_j\right)
	\right]
	\right\}
	=b_i,
	~
	\forall i\in\mathscr{I}.
	$
	Hence, $x\in S_i$ for every $i\in\mathscr{I}$. It follows that
	$
	x\in\bigcap_{i\in\mathscr{I}}S_i
	=
	S\left(A^{+},A^{-},b\right).
	$
	Conversely, suppose that
	$x\in S\left(A^{+},A^{-},b\right)$. Then
	$x\in S_i$ for every $i\in\mathscr{I}$. Therefore, for every
	$i\in\mathscr{I}$ and $j\in\mathscr{J}$, the corresponding bipolar
	clinical relationship cannot exceed the required clinical degree,
	which implies
	$
	x_j\in I_{ij},~ \forall i\in\mathscr{I},\
	\forall j\in\mathscr{J}.
	$
	Consequently,
	$
	x_j\in\bigcap_{i\in\mathscr{I}}I_{ij}=I_j,
	~
	\forall j\in\mathscr{J},
	$
	so condition~\textbf{(I)} holds.
	Moreover, since $x\in S_i$ for every $i\in\mathscr{I}$, the defining
	equality for the $i$-th clinical requirement must be attained by at
	least one clinical criterion. Thus, for each $i\in\mathscr{I}$, there
	exists some $j_i\in\mathscr{J}$ such that
	$
	x_{j_i}\in S_{ij_i}.
	$
	Together with $x_{j_i}\in I_{j_i}$, already established by
	condition~\textbf{(I)}, this yields
	$
	x_{j_i}\in S_{ij_i}\cap I_{j_i}
	=S_{ij_i}^{\prime}.
	$
	Hence, condition~\textbf{(II)} also holds.
	Therefore, $x\in S\left(A^{+},A^{-},b\right)$ if and only if
	conditions \textbf{(I)} and \textbf{(II)} are satisfied.
\end{proof}

\begin{corollary}\label{corl-2}
	For each $i\in\mathscr{I}$, a recommendation vector
	$x\in[0,1]^n$ belongs to the Clinical Requirement Satisfaction Set
	$S_i$ if and only if
	$
	x_j\in I_{ij},~ \forall j\in\mathscr{J},
	$
	and there exists at least one $j_i\in\mathscr{J}$ such that
	$
	x_{j_i}\in S_{ij_i}^{\prime}.
	$
	In other words, the $i$-th clinical requirement is satisfied precisely
	when all clinical criteria remain within their corresponding admissible
	ranges and at least one criterion actively attains the required
	clinical degree $b_i$.
\end{corollary}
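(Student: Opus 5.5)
The plan is to obtain Corollary~\ref{corl-2} by repeating the argument of Lemma~\ref{lm-4}, but for the single row $i$ instead of for the whole system. In other words, we replace the clinical admissibility interval $I_j=\bigcap_{i}I_{ij}$ by the single set $I_{ij}$. The key identity is the decomposition of the $i$-th requirement as a maximum over $j$ of the bipolar terms
$
g_{ij}(x_j)=\max\{\min(a_{ij}^{+},x_j),\min(a_{ij}^{-},1-x_j)\}.
$
We have $\max_j g_{ij}(x_j)=b_i$ if and only if $g_{ij}(x_j)\le b_i$ for every $j$ and $g_{ij}(x_j)=b_i$ for some $j$. By Definition~\ref{def-3}, the first condition says $x_j\in I_{ij}$ for all $j$, and the second says $x_{j_i}\in S_{ij_i}$ for some $j_i$.

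\emph{Sufficiency.} Assume $x_j\in I_{ij}$ for all $j$ and $x_{j_i}\in S_{ij_i}^{\prime}$ for some $j_i$. Since $S_{ij_i}^{\prime}=S_{ij_i}\cap I_{j_i}\subseteq S_{ij_i}$, we get $g_{ij_i}(x_{j_i})=b_i$. Together with $g_{ij}(x_j)\le b_i$ for every $j$, this gives $\max_j g_{ij}(x_j)=b_i$, that is, $x\in S_i$.

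\emph{Necessity.} Assume $x\in S_i$. Then no term exceeds $b_i$, so $x_j\in I_{ij}$ for every $j$. The maximum over the finite index set $\mathscr{J}$ is attained, so some $j_i$ satisfies $g_{ij_i}(x_{j_i})=b_i$, which means $x_{j_i}\in S_{ij_i}$. This part is routine.

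The main obstacle is the final step, upgrading $x_{j_i}\in S_{ij_i}$ to $x_{j_i}\in S_{ij_i}^{\prime}$. The set $S_{ij}^{\prime}$ involves $I_{j_i}$, which depends on all requirements $i'\in\mathscr{I}$, whereas membership in $S_i$ only controls row $i$. Literally, then, necessity with $S^{\prime}$ can fail for an $x$ lying in $S_i$ but outside $\prod_j I_j$.

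I would handle this explicitly in one of two ways.
\begin{itemize}
\item State and prove the characterization with $S_{ij_i}$ in place of $S_{ij_i}^{\prime}$. This is exactly what the argument above gives, and with this replacement the equivalence holds verbatim.
\item Read the corollary, as it is used toward Lemma~\ref{lm-4}, for vectors that also satisfy condition~(I) of Lemma~\ref{lm-4}, namely $x_j\in I_j$ for all $j$. Then $x_{j_i}\in S_{ij_i}\cap I_{j_i}=S_{ij_i}^{\prime}$, and the stated form follows.
\end{itemize}

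Intersecting the resulting characterization over all $i\in\mathscr{I}$ then recovers Lemma~\ref{lm-4}, which confirms consistency with that lemma.
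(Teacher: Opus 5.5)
Your proposal is correct and follows the paper's route. The paper's proof specializes Lemma~\ref{lm-4} to the single requirement $i$ by replacing $I_j$ with $I_{ij}$, and your row-wise decomposition of the $i$-th equation (every term at most $b_i$, with equality for some $j$) is that argument written out. Your objection to the ``only if'' direction is also right, and the paper's one-line proof passes over it. Under the substitution $I_j\to I_{ij}$, the set $S_{ij}^{\prime}=S_{ij}\cap I_j$ becomes $S_{ij}\cap I_{ij}=S_{ij}$. So what that argument actually proves is your first version, with $S_{ij_i}$ in place of $S_{ij_i}^{\prime}$.

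A concrete witness for the failure of the printed form: take $m=2$, $n=1$, $a_{11}^{+}=a_{21}^{+}=1$, $a_{11}^{-}=a_{21}^{-}=0$, $b=(0.5,0.3)^{T}$. Then $I_1=[0,0.5]\cap[0,0.3]=[0,0.3]$ and $S_{11}^{\prime}=\{0.5\}\cap[0,0.3]=\varnothing$, although $x_1=0.5$ lies in $S_1$ and in $I_{11}$. This is not an artifact of infeasibility. Add a second criterion with $a_{12}^{+}=1$ and $a_{12}^{-}=a_{22}^{+}=a_{22}^{-}=0$; the system becomes feasible (e.g.\ $(0.3,0.5)$ is a solution), yet $x=(0.5,0.2)\in S_1$ still admits no $j$ with $x_j\in S_{1j}^{\prime}$.

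Either of your repairs is sound. The corollary is not invoked anywhere later in the paper, so nothing downstream depends on the faulty form.
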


\begin{proof}
	The result follows directly from Lemma~\ref{lm-4} by considering the
	$i$-th clinical requirement separately. In particular, replacing the
	overall Clinical Recommendation Solution Set
	$S\left(A^{+},A^{-},b\right)$ and the Clinical Admissibility Interval
	$I_j$ in Lemma~\ref{lm-4} with the corresponding requirement-specific
	sets $S_i$ and $I_{ij}$ yields the stated characterization.
\end{proof}

\begin{definition}\label{def-5}
	For each $i\in\mathscr{I}$, define the \emph{Effective Clinical Criterion
		Set} associated with treatment alternative $T_i$ by
	$
	\mathscr{J}_{i}
	=
	\left\{
	j\in\mathscr{J}:S_{ij}^{\prime}\neq\varnothing
	\right\}.
	$
	Thus, $\mathscr{J}_{i}$ contains all clinical criteria whose
	recommendation levels can effectively contribute to satisfying the
	clinical requirement associated with $T_i$ within the corresponding
	Clinical Admissibility Interval.
	Similarly, for each $j\in\mathscr{J}$, define the \emph{Effective
		Treatment Requirement Set} associated with clinical criterion $C_j$ by
	$
	\mathscr{I}_{j}
	=
	\left\{
	i\in\mathscr{I}:S_{ij}^{\prime}\neq\varnothing
	\right\}.
	$
	Hence, $\mathscr{I}_{j}$ identifies all treatment alternatives whose
	clinical requirements can be actively satisfied through criterion
	$C_j$ at an admissible recommendation level.
\end{definition}

\begin{definition}\label{def-6}
	A function
	$e:\mathscr{I}\rightarrow\mathscr{J}$ is called an
	\emph{Admissible Clinical Evidence-Assignment Function} if
	$
	e(i)\in\mathscr{J}_{i}(e),~ \forall i\in\mathscr{I},
	$
	where $\mathscr{J}_{i}(e)$ is determined recursively as follows.
	
	\begin{itemize}
		\item[\textbf{(I)}] For the first treatment alternative,
		$
		\mathscr{J}_{1}(e)=\mathscr{J}_{1}.
		$
		Thus, any clinical criterion that can effectively satisfy the
		first clinical requirement may be assigned to $T_1$.
		
		\item[\textbf{(II)}] For each $j\in\mathscr{J}$ and
		$i\in\mathscr{I}\setminus\{1\}$, define
		$
		\mathscr{I}_{j}(e,i)
		=
		\left\{
		k\in\mathscr{I}:1\leq k<i,\ e(k)=j
		\right\}.
		$
		Hence, $\mathscr{I}_{j}(e,i)$ contains the treatment indices
		preceding $i$ that have already been assigned the same clinical
		criterion $C_j$.
		
		\item[\textbf{(III)}] For each
		$i\in\mathscr{I}\setminus\{1\}$, define
		$
		\mathscr{J}_{i}(e)
		=
		\left\{
		j\in\mathscr{J}_{i}:
		\mathscr{I}_{j}(e,i)=\varnothing
		\ \text{or}\
		S_{ij}^{\prime}
		\cap
		\left(
		\bigcap_{k\in\mathscr{I}_{j}(e,i)}
		S_{kj}^{\prime}
		\right)
		\neq\varnothing
		\right\}.
		$
		Thus, a clinical criterion $C_j$ remains available for assignment
		to $T_i$ either when it has not previously been assigned to any
		treatment alternative, or when the effective evidence activation
		sets associated with all treatment alternatives already assigned
		to $C_j$ have a common recommendation level with
		$S_{ij}^{\prime}$.
	\end{itemize}
	
	Let $E$ denote the set of all Admissible Clinical Evidence-Assignment
	Functions. For convenience, each $e\in E$ can be represented by the
	vector
	$
	e=[j_1,\ldots,j_m],
	$
	where
	$
	e(i)=j_i,~ \forall i\in\mathscr{I}.
	$
	Thus, $j_i$ identifies the clinical criterion selected as an effective
	evidence contributor for the treatment alternative $T_i$.
\end{definition}

\begin{definition}\label{def-7}
	For each admissible clinical evidence-assignment function
	$e\in E$ and each $j\in\mathscr{J}$, define
	$
	\mathscr{I}_{j}(e)
	=
	\left\{
	i\in\mathscr{I}:e(i)=j
	\right\}.
	$
	Thus, $\mathscr{I}_{j}(e)$ contains the treatment alternatives whose
	corresponding clinical requirements select criterion $C_j$ as an
	effective evidence contributor under the assignment $e$.
	
	For each $e\in E$, let $S(e)$ denote the set of all clinical
	recommendation vectors
	$x=(x_1,\ldots,x_n)\in[0,1]^n$ satisfying
	\begin{equation}\label{eq-5}
		x_j\in
		\begin{cases}
			\displaystyle
			\bigcap_{i\in\mathscr{I}_{j}(e)} S_{ij}^{\prime},
			& \mathscr{I}_{j}(e)\neq\varnothing,\\[2ex]
			I_j,
			& \mathscr{I}_{j}(e)=\varnothing,
		\end{cases}
		~ \forall j\in\mathscr{J}.
	\end{equation}
	Accordingly, define the \emph{Admissible Recommendation-Level Set}
	associated with criterion $C_j$ under $e$ by
	\[
	S(e)_j=
	\begin{cases}
		\displaystyle
		\bigcap_{i\in\mathscr{I}_{j}(e)}S_{ij}^{\prime},
		& \mathscr{I}_{j}(e)\neq\varnothing,\\[2ex]
		I_j,
		& \mathscr{I}_{j}(e)=\varnothing.
	\end{cases}
	\]
	Then
	$
	S(e)=S(e)_1\times\cdots\times S(e)_n.
	$
	In this way, $S(e)$ represents the \emph{Clinical Recommendation
		Region Associated with the Admissible Evidence Assignment} $e$. For a
	criterion $C_j$ that has been selected by one or more treatment
	requirements, its recommendation level must belong to the intersection
	of the corresponding Effective Evidence Activation Sets. If no
	treatment requirement selects $C_j$ under $e$, its recommendation level
	may take any value in the Clinical Admissibility Interval $I_j$.
\end{definition}

The recursive condition in Definition~\ref{def-6} can be expressed in a simpler global form. The following corollary shows that the admissibility of an evidence-assignment function is completely determined by the compatibility of the Effective Evidence Activation
Sets corresponding to each clinical criterion. More precisely, whenever several treatment requirements are assigned to the same clinical criterion, their corresponding Effective Evidence Activation Sets must have a nonempty common intersection. This result provides a convenient criterion for identifying admissible functions and will be used in the construction of the feasible clinical recommendation set.

\begin{corollary}\label{corl-3}
	Let
	$
	e:\mathscr{I}\rightarrow\bigcup_{i\in\mathscr{I}}\mathscr{J}_i
	$
	be a function satisfying
	$
	e(i)\in\mathscr{J}_i,~ \forall i\in\mathscr{I}.
	$
	Then $e$ is an Admissible Clinical Evidence-Assignment Function,
	that is, $e\in E$, if and only if
	$
	\bigcap_{i\in\mathscr{I}_j(e)}S_{ij}^{\prime}\neq\varnothing
	$
	for every $j\in\mathscr{J}$ such that
	$\mathscr{I}_j(e)\neq\varnothing$.
	In other words, an assignment is admissible precisely when, for every
	clinical criterion selected by one or more treatment requirements,
	there exists at least one common recommendation level at which that
	criterion can simultaneously satisfy all of the corresponding
	requirements.
\end{corollary}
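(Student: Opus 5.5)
Both directions unfold the recursion in Definition~\ref{def-6} one index at a time. The one fact needed is that, for every $i\in\mathscr{I}$ and $j\in\mathscr{J}$,
\[
\mathscr{I}_j(e,i)\cup\{i\}=\{k\in\mathscr{I}_j(e):k\le i\}\quad\text{whenever } e(i)=j,
\]
which follows directly from the definitions of $\mathscr{I}_j(e,i)$ and $\mathscr{I}_j(e)$ (Definition~\ref{def-7}). Throughout we fix a function $e$ with $e(i)\in\mathscr{J}_i$ for all $i$.

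\textbf{Sufficiency.} Assume that $\bigcap_{i\in\mathscr{I}_j(e)}S_{ij}^{\prime}\neq\varnothing$ for every $j$ with $\mathscr{I}_j(e)\neq\varnothing$. We show $e(i)\in\mathscr{J}_i(e)$ for every $i$.
\begin{itemize}
\item For $i=1$ this is immediate, since $\mathscr{J}_1(e)=\mathscr{J}_1$.
\item For $i\ge 2$, put $j=e(i)$. If $\mathscr{I}_j(e,i)=\varnothing$, then $j\in\mathscr{J}_i(e)$ by condition (III).
\item Otherwise, by the displayed identity, $S_{ij}^{\prime}\cap\bigcap_{k\in\mathscr{I}_j(e,i)}S_{kj}^{\prime}$ is the intersection over the subfamily $\{k\in\mathscr{I}_j(e):k\le i\}\subseteq\mathscr{I}_j(e)$. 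It therefore contains $\bigcap_{k\in\mathscr{I}_j(e)}S_{kj}^{\prime}$, which is nonempty by hypothesis, so $j\in\mathscr{J}_i(e)$.
\end{itemize}
Hence $e\in E$.

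\textbf{Necessity.} Assume $e\in E$ and fix $j$ with $\mathscr{I}_j(e)\neq\varnothing$. List its elements in increasing order as $i_1<i_2<\dots<i_r$. We prove by induction on $s$ that $\bigcap_{t\le s}S_{i_tj}^{\prime}\neq\varnothing$.
\begin{itemize}
\item \emph{Base case.} Since $e(i_1)=j\in\mathscr{J}_{i_1}$, Definition~\ref{def-5} gives $S_{i_1j}^{\prime}\neq\varnothing$.
\item \emph{Inductive step.} For $s\ge 2$ we have $\mathscr{I}_j(e,i_s)=\{i_1,\dots,i_{s-1}\}\neq\varnothing$. Because $e(i_s)\in\mathscr{J}_{i_s}(e)$, condition (III) forces $S_{i_sj}^{\prime}\cap\bigcap_{t<s}S_{i_tj}^{\prime}\neq\varnothing$, which is exactly the claim at stage $s$.
\end{itemize}
Taking $s=r$ gives the required nonempty intersection.

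Note that in the necessity direction the last step of the recursion alone already yields the full intersection, so the induction mainly records that the base case uses membership in $\mathscr{J}_i$. No step is a genuine obstacle. The only care needed is bookkeeping: matching $\mathscr{I}_j(e,i)$ with the initial segments of $\mathscr{I}_j(e)$, and handling separately the case where $j$ has not been assigned before, which is covered by $e(i)\in\mathscr{J}_i$.
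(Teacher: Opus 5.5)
Your proof is correct and follows the paper's argument. For sufficiency, the partial intersection over $\mathscr{I}_j(e,i)\cup\{i\}\subseteq\mathscr{I}_j(e)$ contains the full one. For necessity, you apply condition (III) at $\max\mathscr{I}_j(e)$, exactly as the paper does; your induction collapses to that single step plus $S'_{i_1j}\neq\varnothing$ from $j\in\mathscr{J}_{i_1}$ in the singleton case, which the paper covers only implicitly.
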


\begin{proof}
	Suppose first that $e\in E$. Let $j_0\in\mathscr{J}$ be such that
	$\mathscr{I}_{j_0}(e)\neq\varnothing$, and let
	$
	i_0=\max\mathscr{I}_{j_0}(e).
	$
	By the definition of $\mathscr{I}_{j_0}(e,i_0)$, we have
	$
	\mathscr{I}_{j_0}(e)
	=
	\mathscr{I}_{j_0}(e,i_0)\cup\{i_0\}.
	$
	Since $e$ is admissible, Definition~\ref{def-6} implies
	$
	S_{i_0j_0}^{\prime}
	\cap
	\left(
	\bigcap_{k\in\mathscr{I}_{j_0}(e,i_0)}
	S_{kj_0}^{\prime}
	\right)
	\neq\varnothing.
	$
	Therefore,
	$
	\bigcap_{i\in\mathscr{I}_{j_0}(e)}
	S_{ij_0}^{\prime}
	\neq\varnothing.
	$
	Since $j_0$ was arbitrary, the required intersection is nonempty for
	every $j\in\mathscr{J}$ with
	$\mathscr{I}_j(e)\neq\varnothing$.
	Conversely, suppose that
	$
	\bigcap_{i\in\mathscr{I}_j(e)}
	S_{ij}^{\prime}\neq\varnothing
	$
	for every $j\in\mathscr{J}$ such that
	$\mathscr{I}_j(e)\neq\varnothing$.
	Because $e(i)\in\mathscr{J}_i$ for every $i\in\mathscr{I}$, it remains
	only to verify the compatibility condition in
	Definition~\ref{def-6}.
	Consider any $i_0\in\mathscr{I}\setminus\{1\}$ and let
	$j_0=e(i_0)$. If
	$\mathscr{I}_{j_0}(e,i_0)=\varnothing$, the required condition holds
	automatically. Otherwise,
	$
	\mathscr{I}_{j_0}(e,i_0)\subseteq\mathscr{I}_{j_0}(e),
	$
	and, since $i_0\in\mathscr{I}_{j_0}(e)$, the assumption gives
	$
	\bigcap_{i\in\mathscr{I}_{j_0}(e)}
	S_{ij_0}^{\prime}\neq\varnothing.
	$
	Hence there exists a recommendation level belonging simultaneously to
	$S_{i_0j_0}^{\prime}$ and to every
	$S_{kj_0}^{\prime}$ with
	$k\in\mathscr{I}_{j_0}(e,i_0)$. Thus,
	$
	S_{i_0j_0}^{\prime}
	\cap
	\left(
	\bigcap_{k\in\mathscr{I}_{j_0}(e,i_0)}
	S_{kj_0}^{\prime}
	\right)
	\neq\varnothing.
	$
	Therefore, the compatibility condition in Definition~\ref{def-6} is
	satisfied for every $i\in\mathscr{I}$, and consequently $e\in E$.
\end{proof}

\begin{remark}\label{rmk-2}
	Corollary~\ref{corl-3} also provides an immediate upper bound on the
	number of admissible evidence-assignment functions. Indeed, for each
	$i\in\mathscr{I}$, the value $e(i)$ can be selected from
	$\mathscr{J}_{i}$, and hence
	$
	|E|
	\leq
	\prod_{i\in\mathscr{I}}|\mathscr{J}_{i}|,
	$
	where $|\mathscr{J}_{i}|$ denotes the cardinality of
	$\mathscr{J}_{i}$. This bound corresponds to the number of all possible
	criterion assignments before the compatibility conditions in
	Corollary~\ref{corl-3} are imposed. In general, many of these
	assignments are excluded because the corresponding Effective Evidence
	Activation Sets do not have a common intersection. Therefore, the
	actual number of admissible functions is typically considerably smaller
	than the above upper bound.
\end{remark}

The preceding results provide the necessary ingredients for a complete
characterization of the feasible clinical recommendation set. In
particular, each admissible evidence-assignment function $e\in E$
identifies a collection of compatible recommendation levels for the
clinical criteria. The following theorem shows that every feasible
clinical recommendation belongs to one of these regions, and,
conversely, every vector contained in such a region is feasible.
Consequently, the complete feasible solution set can be obtained as
the union of the recommendation regions associated with all admissible
evidence assignments.

\begin{theorem}\label{thm-1}
	The feasible solution set of Problem~(\ref{eq_1}) is given by
	$
	S\left(A^{+},A^{-},b\right)
	=
	\bigcup_{e\in E}S(e).
	$
\end{theorem}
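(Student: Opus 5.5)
The plan is to prove the two inclusions separately, using Lemma~\ref{lm-4} as the working characterization of feasibility and Corollary~\ref{corl-3} as the working characterization of admissibility.

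\emph{Inclusion $\bigcup_{e\in E}S(e)\subseteq S(A^{+},A^{-},b)$.} Fix $e\in E$ and $x\in S(e)$; we verify conditions (I) and (II) of Lemma~\ref{lm-4}.
\begin{itemize}
\item[(I)] Take any $j\in\mathscr{J}$. If $\mathscr{I}_j(e)=\varnothing$, then $x_j\in I_j$ directly by \eqref{eq-5}. Otherwise $x_j\in\bigcap_{i\in\mathscr{I}_j(e)}S_{ij}^{\prime}$, and each $S_{ij}^{\prime}=S_{ij}\cap I_j$ is contained in $I_j$, so again $x_j\in I_j$.
\item[(II)] Take any $i\in\mathscr{I}$ and put $j_i=e(i)$. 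Then $i\in\mathscr{I}_{j_i}(e)$, so this index set is nonempty and $x_{j_i}\in\bigcap_{k\in\mathscr{I}_{j_i}(e)}S_{kj_i}^{\prime}\subseteq S_{ij_i}^{\prime}$.
\end{itemize}
Lemma~\ref{lm-4} then gives $x\in S(A^{+},A^{-},b)$.

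\emph{Inclusion $S(A^{+},A^{-},b)\subseteq\bigcup_{e\in E}S(e)$.} Fix a feasible $x$. By Lemma~\ref{lm-4}(II), for each $i$ there is some $j_i$ with $x_{j_i}\in S_{ij_i}^{\prime}$; if there are several, choose one, say the smallest. Define $e(i)=j_i$. Since $S_{ij_i}^{\prime}\ni x_{j_i}$ is nonempty, $j_i\in\mathscr{J}_i$, so $e(i)\in\mathscr{J}_i$ for every $i$.
\begin{itemize}
\item \emph{Admissibility of $e$.} Let $j$ satisfy $\mathscr{I}_j(e)\neq\varnothing$. For every $i\in\mathscr{I}_j(e)$ we have $e(i)=j$, hence $x_j\in S_{ij}^{\prime}$ by the choice of $e$. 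Therefore $x_j\in\bigcap_{i\in\mathscr{I}_j(e)}S_{ij}^{\prime}$, so this intersection is nonempty. Corollary~\ref{corl-3} then yields $e\in E$.
\item \emph{Membership $x\in S(e)$.} The computation just made shows $x_j\in S(e)_j$ for every $j$ with $\mathscr{I}_j(e)\neq\varnothing$. For $j$ with $\mathscr{I}_j(e)=\varnothing$, Lemma~\ref{lm-4}(I) gives $x_j\in I_j=S(e)_j$. Hence $x\in S(e)_1\times\cdots\times S(e)_n=S(e)$.
\end{itemize}

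The key step is the admissibility check in the second inclusion. Verifying the recursive Definition~\ref{def-6} directly would be awkward; the point is that Corollary~\ref{corl-3} reduces it to nonemptiness of one intersection per selected criterion. That nonemptiness is witnessed by the common value $x_j$, because $e$ was defined from $x$ itself. Everything else is direct unpacking of the definitions.
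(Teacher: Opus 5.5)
Your proposal is correct and follows essentially the same route as the paper. Both inclusions are checked against Lemma~\ref{lm-4}, and for the reverse inclusion the paper likewise defines $e_0(i)=\min\{j\in\mathscr{J}: x_j\in S_{ij}^{\prime}\}$ and certifies $e_0\in E$ through Corollary~\ref{corl-3}, with $x_j$ itself witnessing each required nonempty intersection.
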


\begin{proof}
	We first prove that
	$
	\bigcup_{e\in E}S(e)
	\subseteq
	S\left(A^{+},A^{-},b\right).
	$
	Let
	$
	x\in\bigcup_{e\in E}S(e).
	$
	Then there exists an admissible function $e_0\in E$ such that
	$x\in S(e_0)$. According to~(\ref{eq-5}), for each
	$j\in\mathscr{J}$, either
	$
	x_j\in
	\bigcap_{i\in\mathscr{I}_j(e_0)}S_{ij}^{\prime}
	~\text{if }\mathscr{I}_j(e_0)\neq\varnothing,
	$
	or
	$
	x_j\in I_j
	~\text{if }\mathscr{I}_j(e_0)=\varnothing.
	$
	In the first case, since
	$S_{ij}^{\prime}=S_{ij}\cap I_j$ by Definition~\ref{def-4},
	we also have $x_j\in I_j$. Hence,
	$
	x_j\in I_j,~ \forall j\in\mathscr{J}.
	$
	Moreover, since $e_0\in E$, Definition~\ref{def-6} gives
	$
	e_0(i)=j_i\in\mathscr{J}_i(e_0)\subseteq\mathscr{J}_i,
	~ \forall i\in\mathscr{I}.
	$
	Thus,
	$
	\mathscr{I}_{j_i}(e_0)\neq\varnothing
	$
	for every $i\in\mathscr{I}$. Consequently, by~(\ref{eq-5}),
	$
	x_{j_i}
	\in
	\bigcap_{k\in\mathscr{I}_{j_i}(e_0)}
	S_{kj_i}^{\prime}
	\subseteq
	S_{ij_i}^{\prime},
	~ \forall i\in\mathscr{I}.
	$
	Therefore, for every treatment requirement $i$, there exists a
	criterion $j_i$ such that
	$
	x_{j_i}\in S_{ij_i}^{\prime}.
	$
	Together with $x_j\in I_j$ for all $j\in\mathscr{J}$, Lemma~\ref{lm-4}
	implies
	$
	x\in S\left(A^{+},A^{-},b\right).
	$
	Hence,
	$
	\bigcup_{e\in E}S(e)
	\subseteq
	S\left(A^{+},A^{-},b\right).
	$
	For the converse inclusion, let
	$
	x\in S\left(A^{+},A^{-},b\right).
	$
	Define
	$
	\mathscr{I}_j(x)
	=
	\left\{
	i\in\mathscr{I}:x_j\in S_{ij}^{\prime}
	\right\},
	~ j\in\mathscr{J},
	$
	and
	$
	\mathscr{J}_i(x)
	=
	\left\{
	j\in\mathscr{J}:x_j\in S_{ij}^{\prime}
	\right\},
	~ i\in\mathscr{I}.
	$
	By Lemma~\ref{lm-4}, for every $i\in\mathscr{I}$ there exists at
	least one $j\in\mathscr{J}$ such that
	$x_j\in S_{ij}^{\prime}$. Hence,
	$
	\mathscr{J}_i(x)\neq\varnothing,
	~ \forall i\in\mathscr{I}.
	$
	Furthermore, if $j\in\mathscr{J}_i(x)$, then
	$S_{ij}^{\prime}\neq\varnothing$, and therefore
	$j\in\mathscr{J}_i$ by Definition~\ref{def-5}.
	For each $i\in\mathscr{I}$, choose
	$
	j_i=\min\mathscr{J}_i(x)
	$
	and define $e_0:\mathscr{I}\to\bigcup_{i\in\mathscr{I}}\mathscr{J}_i$
	by
	$
	e_0(i)=j_i.
	$
	Then
	\begin{equation}\label{eq-6}
		e_0(i)=j_i\in\mathscr{J}_i,
		~ \forall i\in\mathscr{I}.
	\end{equation}
	Consider a criterion $C_j$ that is selected by at least one treatment
	requirement under $e_0$, that is,
	$\mathscr{I}_j(e_0)\neq\varnothing$. If
	$k\in\mathscr{I}_j(e_0)$, then $e_0(k)=j$. By the construction of
	$e_0$,
	$
	j=\min\mathscr{J}_k(x),
	$
	and hence
	$
	x_j\in S_{kj}^{\prime}.
	$
	Therefore,
	$
	x_j\in
	\bigcap_{k\in\mathscr{I}_j(e_0)}
	S_{kj}^{\prime}
	$
	for every $j$ with
	$\mathscr{I}_j(e_0)\neq\varnothing$. Thus,
	\begin{equation}\label{eq-7}
		\bigcap_{i\in\mathscr{I}_j(e_0)}
		S_{ij}^{\prime}\neq\varnothing,
		~
		\forall j\in\mathscr{J}
		\text{ such that }
		\mathscr{I}_j(e_0)\neq\varnothing.
	\end{equation}
	By Corollary~\ref{corl-3}, (\ref{eq-6}) and (\ref{eq-7}) imply that
	$
	e_0\in E.
	$
	Finally, Lemma~\ref{lm-4} gives
	$
	x_j\in I_j,~ \forall j\in\mathscr{J}.
	$
	Hence, if $\mathscr{I}_j(e_0)=\varnothing$, then
	$x_j\in I_j$, while if $\mathscr{I}_j(e_0)\neq\varnothing$, then
	$
	x_j\in
	\bigcap_{i\in\mathscr{I}_j(e_0)}
	S_{ij}^{\prime}.
	$
	According to~(\ref{eq-5}), these conditions imply
	$
	x\in S(e_0).
	$
	Since $e_0\in E$, it follows that
	$
	x\in\bigcup_{e\in E}S(e).
	$
	Therefore,
	$
	S\left(A^{+},A^{-},b\right)
	\subseteq
	\bigcup_{e\in E}S(e).
	$
	Combining the two inclusions yields
	$
	S\left(A^{+},A^{-},b\right)
	=
	\bigcup_{e\in E}S(e).
	$
\end{proof}

The preceding results allow us to establish an important structural
property of the feasible clinical recommendation set. For each
admissible evidence-assignment function $e\in E$, the set $S(e)$ is
constructed coordinatewise from either an Effective Evidence
Activation Set or a Clinical Admissibility Interval. Since these sets
are closed and bounded, each $S(e)$ is compact. Consequently, the
following result shows that the feasible clinical recommendation set
is represented as a finite union of compact recommendation regions.

Suppose that $e\in E$, $x\in S(e)$, and $j\in\mathscr{J}$. According to
(\ref{eq-5}), either
$
x_j\in
\bigcap_{i\in\mathscr{I}_j(e)}S_{ij}^{\prime},
~
\mathscr{I}_j(e)\neq\varnothing,
$
or
$
x_j\in I_j=[L_j,U_j],
~
\mathscr{I}_j(e)=\varnothing.
$
In the former case, Corollary~\ref{corl-3} guarantees that
$
\bigcap_{i\in\mathscr{I}_j(e)}S_{ij}^{\prime}\neq\varnothing.
$
Moreover, each $S_{ij}^{\prime}$ is a closed and bounded subset of
$[0,1]$. Hence, their finite intersection is also closed and bounded.
Although this intersection need not be connected, it is compact.
The latter case is immediate because $I_j=[L_j,U_j]$ is a closed and
bounded interval. Therefore, every coordinate set $S(e)_j$ is compact,
and hence
$
S(e)=S(e)_1\times\cdots\times S(e)_n
$
is a compact subset of $[0,1]^n$.
Since $E$ is finite, Theorem~\ref{thm-1} consequently implies that
the feasible clinical recommendation set
$S(A^{+},A^{-},b)$ is a finite union of compact sets. These compact
sets are not necessarily connected, reflecting the potentially
disconnected structure of the feasible clinical recommendation region.

\begin{example}\label{ex-2}
	Consider the CDSS problem stated in Example~\ref{ex-1}. From
	Definition~\ref{def-5} and the Effective Evidence Activation Sets
	obtained in Example~\ref{ex-1}, we have
	$\mathscr{J}_{1}=\{3,4\}$,
	$\mathscr{J}_{2}=\{1,3\}$,
	$\mathscr{J}_{3}=\{4,5\}$,
	$\mathscr{J}_{4}=\{2\}$,
	$\mathscr{J}_{5}=\{1,5\}.$
	Hence, according to Remark~\ref{rmk-2}, the number of possible
	evidence-assignment functions is bounded above by
	$
	\prod_{i\in\mathscr{I}}|\mathscr{J}_{i}|
	=
	2\times2\times2\times1\times2
	=16.
	$
	This is only an upper bound, since not all such assignments satisfy the
	compatibility condition required for admissibility.
	To illustrate this point, consider first the function
	$
	e_{1}=[3,3,4,2,1].
	$
	Clearly, $e_1(i)\in\mathscr{J}_i$ for every $i\in\mathscr{I}$. However,
	$e_1(1)=e_1(2)=3$, and therefore
	$
	\mathscr{I}_{3}(e_1)=\{1,2\}.
	$
	From Example~\ref{ex-1},
	$
	S_{13}^{\prime}=[0.6,1],
	~
	S_{23}^{\prime}=\{0.5\}.
	$
	Consequently,
	$
	\bigcap_{i\in\mathscr{I}_{3}(e_1)}S_{i3}^{\prime}
	=
	S_{13}^{\prime}\cap S_{23}^{\prime}
	=
	\varnothing.
	$
	Thus, by Corollary~\ref{corl-3}, $e_1\notin E$.
	In the clinical interpretation, criterion $C_3$ cannot simultaneously
	serve as effective evidence for $T_1$ and $T_2$, because there is no
	common recommendation level of $C_3$ satisfying both corresponding
	requirements.
	Now consider
	$
	e_{2}=[3,1,4,2,5].
	$
	Again, $e_2(i)\in\mathscr{J}_i$ for every $i\in\mathscr{I}$. Moreover,
	each selected clinical criterion has a nonempty effective activation
	set compatible with the corresponding treatment requirement. In
	particular,
	$\mathscr{I}_{1}(e_2)=\{2\}$,
	$\mathscr{I}_{2}(e_2)=\{4\}$,
	$\mathscr{I}_{3}(e_2)=\{1\}$,
	$\mathscr{I}_{4}(e_2)=\{3\}$,
	$\mathscr{I}_{5}(e_2)=\{5\}.$
	Hence, all intersections in Corollary~\ref{corl-3} are nonempty, and
	therefore
	$
	e_2\in E.
	$
	For this admissible evidence-assignment function, Definition~\ref{def-7}
	gives
	$S(e_2)_1=S_{21}^{\prime}=\{0.5\}$,
	$S(e_2)_2=S_{42}^{\prime}=[0.7,1]$,
	$S(e_2)_3=S_{13}^{\prime}=[0.6,1]$,
	$S(e_2)_4=S_{34}^{\prime}=[0,0.2]$,
	and
	$
	S(e_2)_5=S_{55}^{\prime}=\{0.4\}.
	$
	Therefore, the clinical recommendation region associated with $e_2$
	is
	$
	S(e_2)
	=
	\{0.5\}\times[0.7,1]\times[0.6,1]
	\times[0,0.2]\times\{0.4\}.
	$
	Thus, $S(e_2)$ contains all clinical recommendation vectors that are
	compatible with the particular admissible evidence-assignment pattern
	$e_2$. By Theorem~\ref{thm-1}, the complete feasible clinical
	recommendation set is obtained by taking the union of such regions over
	all admissible functions $e\in E$.
\end{example}

The preceding results provide the mathematical basis for generating
clinically admissible recommendations and identifying an optimal
recommendation within the proposed BFRE-based Clinical Decision Support
System. The resulting procedure is summarized in Algorithm 1.
It first determines the admissible recommendation levels and verifies
the feasibility of the clinical BFRE system. After applying the
simplification techniques developed for the general BFRE framework
\cite{ref_63}, it constructs the feasible clinical recommendation set
and subsequently determines the local and global clinical optimization
stages through the evidence-assignment-based optimization
procedure.

\begin{algorithm}[!ht]
	\label{myalgorithm1}
	\DontPrintSemicolon
	\SetKwInOut{Input}{Input}
	\SetKwInOut{Output}{Output}
	
	\Input{Fuzzy relational matrices $A^{+}$ and $A^{-}$, fuzzy requirement vector $b$, and objective function $f$, defining Problem~\eqref{eq_1}.}
	\Output{The feasible clinical recommendation set
		$S(A^{+},A^{-},b)$ and a globally optimal clinical recommendation.}
	
	Compute $I_{ij}$, $S_{ij}$, $I_j$, and $S_{ij}^{\prime}$ for all
	$i\in\mathscr{I}$ and $j\in\mathscr{J}$ to determine the clinical
	admissibility intervals and effective evidence activation sets
	(Corollary~\ref{corl-1}, Definition~\ref{def-4}).\;
	
	\If{$I_j=\varnothing$ for some $j\in\mathscr{J}$}{
		terminate; the corresponding clinical criterion has no admissible
		recommendation level, and the CDSS problem is infeasible
		(Lemma~\ref{lm-3}(a)).\;
	}
	\Else{
		\If{$S_{ij}^{\prime}=\varnothing$ for every $j\in\mathscr{J}$ for
			some $i\in\mathscr{I}$}{
			terminate; no clinical criterion can effectively satisfy the
			requirement associated with treatment $T_i$, and the CDSS
			problem is infeasible (Lemma~\ref{lm-3}(b)).\;
		}
		\Else{
			Apply the simplification rules developed for the general BFRE framework
			\cite{ref_63} to reduce the clinical BFRE system by eliminating
			redundant equations and fixing recommendation levels when possible.\;
			
			For each admissible Clinical Evidence-Assignment Function
			$e\in E$, construct the corresponding clinical recommendation
			region $S(e)$ according to Definitions~\ref{def-6} and
			\ref{def-7}, and obtain
			$
			S(A^{+},A^{-},b)
			=
			\bigcup_{e\in E}S(e).
			$
			
			For each $e\in E$, construct the candidate clinical
			recommendation $x^{*}(e)$ according to Definition~\ref{def-8}
			and evaluate $f(x^{*}(e))$.\;
			
			Select
			$
			e^{*}\in
			\operatorname*{arg\,min}_{e\in E}
			f\left(x^{*}(e)\right).
			$
			Then, by Theorems~\ref{thm-2} and~\ref{thm-3},
			$x^{*}(e^{*})$ is a globally optimal clinical recommendation,
			with
			$
			f\left(x^{*}(e^{*})\right)
			=
			\min_{e\in E}f\left(x^{*}(e)\right).
			$
		}
	}
	\caption{BFRE-Based Clinical Recommendation Generation and
		Optimization Algorithm}
\end{algorithm}

\section{Local and Global Optimal Clinical Recommendations}
\label{sec-5}
The fuzzy relational inputs determine feasibility, whereas the objective
function determines preference within the feasible set. Its coefficients or
parameters may represent costs, burdens, risks, or decision-maker preferences.
The mathematical results below require the stated continuity and monotonicity
properties.

Throughout this section, we assume that the clinical objective function
$f:\mathbb{R}^{n}\rightarrow\mathbb{R}$ is continuous and that its
monotonicity with respect to each clinical recommendation level $x_j$
is determined by the two index sets $\mathscr{J}^{+}$ and
$\mathscr{J}^{-}$. In particular, $f$ is non-decreasing in $x_j$ for
every $j\in\mathscr{J}^{+}$ and non-increasing in $x_j$ for every
$j\in\mathscr{J}^{-}$. Thus, the sets $\mathscr{J}^{+}$ and
$\mathscr{J}^{-}$ classify the clinical criteria according to the
direction in which their recommendation levels affect the optimization
objective. For a criterion $C_j$ with $j\in\mathscr{J}^{+}$, increasing
$x_j$ cannot improve the clinical objective, whereas for a criterion
$C_j$ with $j\in\mathscr{J}^{-}$, increasing $x_j$ cannot worsen the
clinical objective.

\begin{definition}\label{def-8}
Suppose that
$
S\left(A^{+},A^{-},b\right)\neq\varnothing.
$
For each admissible Clinical Evidence-Assignment Function $e\in E$, we
define a candidate optimal clinical recommendation vector
$
x^{*}(e)
=
\left(x^{*}(e)_{1},\ldots,x^{*}(e)_{n}\right).
$
The component $x^{*}(e)_j$ represents the recommendation level assigned
to clinical criterion $C_j$ under the evidence-assignment pattern $e$.
For each $j\in\mathscr{J}$, this component is selected from the
corresponding admissible recommendation region according to the
monotonicity of the clinical objective:
\begin{equation}\label{eq-9}
	x^{*}(e)_{j}= \begin{cases}\min \left\{\bigcap_{i \in \mathscr{I}_{j}(e)} S_{i j}^{\prime}\right\} & , j \in \mathscr{J}^{+} ~\text {and } \mathscr{I}_{j}(e) \neq \varnothing  \\ L_{j} & , j \in \mathscr{J}^{+} ~\text {and } \mathscr{I}_{j}(e)=\varnothing \\ \max \left\{\bigcap_{i \in \mathscr{I}_{j}(e)} S_{i j}^{\prime}\right\} & , j \in \mathscr{J}^{-} ~\text {and } \mathscr{I}_{j}(e) \neq \varnothing \\ U_{j} & , j \in \mathscr{J}^{-} ~\text {and } \mathscr{I}_{j}(e)=\varnothing\end{cases}
\end{equation}
Here,
$
I_j=[L_j,U_j]
$
is the Clinical Admissibility Interval associated with clinical
criterion $C_j$, while $S_{ij}^{\prime}$ denotes the Effective Evidence
Activation Set associated with treatment alternative $T_i$ and
clinical criterion $C_j$.

If $\mathscr{I}_{j}(e)\neq\varnothing$, then clinical criterion $C_j$
is selected by at least one treatment requirement under the assignment
$e$. Consequently, its recommendation level must belong to the common
intersection
$
\bigcap_{i\in\mathscr{I}_{j}(e)}S_{ij}^{\prime},
$
which contains the recommendation levels at which $C_j$ can effectively
contribute to all corresponding treatment requirements simultaneously.
For $j\in\mathscr{J}^{+}$, the minimum element of this intersection is
selected because increasing $x_j$ cannot improve the clinical
objective. Conversely, for $j\in\mathscr{J}^{-}$, the maximum element is
selected because increasing $x_j$ cannot worsen the clinical objective.

If $\mathscr{I}_{j}(e)=\varnothing$, then criterion $C_j$ is not selected
as an effective evidence contributor by any treatment requirement under
$e$. Hence, its recommendation level is chosen directly from its
Clinical Admissibility Interval $I_j=[L_j,U_j]$. In this case, $L_j$ is
selected for $j\in\mathscr{J}^{+}$, whereas $U_j$ is selected for
$j\in\mathscr{J}^{-}$, consistently with the monotonicity of the
clinical objective.

Finally, define the set of candidate optimal clinical recommendation
vectors by
$
F^{*}
=
\left\{
x^{*}(e):e\in E
\right\}.
$
Thus, $F^{*}$ contains one objective-preferred clinical recommendation
vector for each admissible Clinical Evidence-Assignment Function and
provides the collection of candidate solutions to be considered in the
subsequent determination of local and global clinical optimization
stages.
\end{definition}

The following theorem establishes the role of the vectors
$x^{*}(e)$ in the optimization stage of the proposed BFRE-based
Clinical Decision Support System. For each admissible Clinical
Evidence-Assignment Function $e\in E$, the vector $x^{*}(e)$ represents
a clinical recommendation that is feasible for the corresponding
clinical recommendation region $S(e)$ and is optimal within that region.
Thus, the collection $F^{*}$ provides a set of candidate clinical
recommendations from which the overall optimal recommendation can be
identified.

\begin{theorem}
	\label{thm-2}
	Suppose that
	$
	S\left(A^{+},A^{-},b\right)\neq\varnothing.
	$
	Then the following statements hold:
	\begin{itemize}
		\item[\textbf{(a)}]
		Every candidate clinical recommendation generated by an admissible
		Clinical Evidence-Assignment Function is feasible for the complete
		BFRE-based CDSS model; that is,
		$
		F^{*}\subseteq S\left(A^{+},A^{-},b\right).
		$
		
		\item[\textbf{(b)}]
		For every $e\in E$, the recommendation vector $x^{*}(e)$ is a global
		optimum of the clinical objective function over the corresponding
		clinical recommendation region $S(e)$.
	\end{itemize}
\end{theorem}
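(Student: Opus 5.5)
The plan is to show first that $x^{*}(e)$ is a well-defined element of $S(e)$ for every $e\in E$. Part (a) then follows from Theorem~\ref{thm-1}. After that, part (b) is a coordinatewise monotonicity argument that uses the product structure $S(e)=S(e)_1\times\cdots\times S(e)_n$.

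\textbf{Step 1 (well-definedness and membership).} Fix $e\in E$ and $j\in\mathscr{J}$. Suppose $\mathscr{I}_j(e)\neq\varnothing$. Corollary~\ref{corl-3} gives $\bigcap_{i\in\mathscr{I}_j(e)}S_{ij}^{\prime}\neq\varnothing$. The discussion following Theorem~\ref{thm-1} shows this set is compact, being a finite intersection of closed bounded sets given explicitly in Remark~\ref{rmk-1}. Hence its minimum and maximum exist and belong to it, so the first and third cases of (\ref{eq-9}) are well defined and lie in $S(e)_j$.

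Suppose instead $\mathscr{I}_j(e)=\varnothing$. Then $S(e)_j=I_j=[L_j,U_j]$, and this interval is nonempty by Lemma~\ref{lm-3}(a), since $S(A^{+},A^{-},b)\neq\varnothing$. Thus $L_j,U_j\in S(e)_j$.

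I will tacitly assume $\mathscr{J}^{+}\cup\mathscr{J}^{-}=\mathscr{J}$, so that every coordinate is covered by (\ref{eq-9}). If an index lies in both sets, $f$ is constant in that variable and either choice works. Under this assumption each coordinate satisfies $x^{*}(e)_j\in S(e)_j$, so $x^{*}(e)\in S(e)$.

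\textbf{Step 2 (part (a)).} Since $x^{*}(e)\in S(e)\subseteq\bigcup_{e'\in E}S(e')$, Theorem~\ref{thm-1} yields $x^{*}(e)\in S(A^{+},A^{-},b)$. This holds for every $e\in E$, so $F^{*}\subseteq S(A^{+},A^{-},b)$.

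\textbf{Step 3 (part (b)).} Take any $y\in S(e)$. By construction, $x^{*}(e)_j\le y_j$ for $j\in\mathscr{J}^{+}$, because $x^{*}(e)_j$ is the minimum of $S(e)_j$ and $y_j\in S(e)_j$. Likewise $x^{*}(e)_j\ge y_j$ for $j\in\mathscr{J}^{-}$.

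I will pass from $y$ to $x^{*}(e)$ one coordinate at a time. Define the chain $z^{(0)}=y$ and let $z^{(k)}$ be obtained from $z^{(k-1)}$ by replacing its $k$-th coordinate with $x^{*}(e)_k$. At step $k$ only coordinate $k$ changes. If $k\in\mathscr{J}^{+}$, that coordinate decreases, and $f$ is non-decreasing in it. If $k\in\mathscr{J}^{-}$, it increases, and $f$ is non-increasing in it. In both cases $f(z^{(k)})\le f(z^{(k-1)})$.

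Chaining the $n$ steps gives $f(x^{*}(e))=f(z^{(n)})\le f(y)$. Note that the intermediate vectors $z^{(k)}$ also lie in the product set $S(e)$, although this is not even needed, since $f$ is defined on all of $\mathbb{R}^n$. Hence $x^{*}(e)$ is a global minimizer of $f$ over $S(e)$.

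\textbf{Main obstacle.} The only delicate point is Step 1: existence of the min and max in (\ref{eq-9}). The intersection of effective activation sets may be disconnected, for example in the union case of Remark~\ref{rmk-1} with $b_i>\tfrac12$. The argument must therefore rest on compactness and nonemptiness, via Corollary~\ref{corl-3}, rather than on an interval description. The monotonicity argument itself is routine once the product structure of $S(e)$ is in hand.
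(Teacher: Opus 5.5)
Your proposal is correct and follows the same route as the paper. For part (a) you establish $x^{*}(e)\in S(e)$ and invoke Theorem~\ref{thm-1}; for part (b) you compare $x^{*}(e)$ coordinatewise with an arbitrary $y\in S(e)$ and use the monotonicity of $f$. You also make rigorous two steps the paper only asserts: you justify that the minima and maxima in \eqref{eq-9} exist (via Corollary~\ref{corl-3}, Lemma~\ref{lm-3}(a), and compactness), and you replace the paper's informal ``combining these inequalities for all $j$'' with a one-coordinate-at-a-time chain.
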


\begin{proof}
	\textbf{(a)}
	Let
	$
	x^{*}(e_{0})\in F^{*}
	$
	for some $e_{0}\in E$. By the definition of $x^{*}(e_{0})$ in
	\eqref{eq-9}, together with the characterization of the corresponding
	clinical recommendation region in \eqref{eq-5}, we have
	$
	x^{*}(e_{0})\in S(e_{0}).
	$
	Since
	$
	S(e_{0})\subseteq\bigcup_{e\in E}S(e),
	$
	it follows that
	$
	x^{*}(e_{0})
	\in
	\bigcup_{e\in E}S(e).
	$
	By Theorem~1, the union of all clinical recommendation regions is
	contained in the complete BFRE solution set. Hence,
	$
	x^{*}(e_{0})
	\in
	S\left(A^{+},A^{-},b\right).
	$
	Since $x^{*}(e_{0})$ was chosen arbitrarily from $F^{*}$, we conclude
	that
	$
	F^{*}\subseteq S\left(A^{+},A^{-},b\right).
	$
	Therefore, every recommendation generated by an admissible Clinical
	Evidence-Assignment Function is a feasible clinical recommendation for
	the complete CDSS model.
	\textbf{(b)}
	Let
	$
	x\in S(e)
	$
	be any clinically admissible recommendation associated with the same
	Clinical Evidence-Assignment Function $e$. By the definition of
	$S(e)$ in \eqref{eq-5} and the construction of $x^{*}(e)$ in
	\eqref{eq-9}, we have
	$
	x^{*}(e)_{j}\leq x_{j},
	~
	\forall j\in\mathscr{J}^{+},
	$
	and
	$
	x_{j}\leq x^{*}(e)_{j},
	~
	\forall j\in\mathscr{J}^{-}.
	$
	For criteria $C_j$ with $j\in\mathscr{J}^{+}$, increasing the
	recommendation level cannot improve the clinical objective because
	$f$ is non-decreasing in $x_j$. Hence,
	$
	x^{*}(e)_{j}\leq x_j
	~\Longrightarrow~
	f\left(x^{*}(e)\right)
	\leq f(x)
	$
	with respect to these components.
	Similarly, for criteria $C_j$ with $j\in\mathscr{J}^{-}$, increasing
	the recommendation level cannot worsen the clinical objective because
	$f$ is non-increasing in $x_j$. Therefore,
	$
	x_j\leq x^{*}(e)_{j}
	~\Longrightarrow~
	f\left(x^{*}(e)\right)
	\leq f(x)
	$
	with respect to these components.
	Combining these inequalities for all $j\in\mathscr{J}$ gives
	$
	f\left(x^{*}(e)\right)\leq f(x),
	~
	\forall x\in S(e).
	$
	Consequently,
	$
	x^{*}(e)
	=
	\underset{x\in S(e)}{\operatorname{arg\,min}}\,
	f(x),
	$
	and therefore $x^{*}(e)$ is a global optimum of the clinical objective
	over the clinical recommendation region $S(e)$.
\end{proof}

The preceding results establish that, for each admissible Clinical
Evidence-Assignment Function $e\in E$, the vector $x^{*}(e)$ is a
global optimum of the clinical objective over the corresponding
clinical recommendation region $S(e)$. The next result connects these
region-wise optimal clinical recommendations to the global optimization
problem defined over the complete BFRE-based Clinical Decision Support
System.

In the CDSS framework, let $S^{*}$ denote the \emph{set of globally
	optimal clinical recommendation vectors}, that is,
\[
S^{*}
=
\left\{
x\in S\left(A^{+},A^{-},b\right):
f(x)
=
\min_{y\in S(A^{+},A^{-},b)}f(y)
\right\}.
\]
Thus, $S^{*}$ contains all clinically admissible recommendation
vectors that attain the best possible value of the specified clinical
objective over the complete BFRE solution set. If several clinical
recommendations provide the same optimal objective value, all of them
belong to $S^{*}$. In this sense, $S^{*}$ represents the collection of
globally preferred clinical recommendations produced by the proposed
CDSS optimization framework.

\begin{theorem}	\label{thm-3}
	Suppose that
	$
	S\left(A^{+},A^{-},b\right)\neq\varnothing
	$
	and let $S^{*}$ denote the set of globally optimal clinical
	recommendation vectors defined by
	$
	S^{*}
	=
	\left\{
	x\in S\left(A^{+},A^{-},b\right):
	f(x)
	=
	\min_{y\in S(A^{+},A^{-},b)}f(y)
	\right\}.
	$
	If there exists $e^{*}\in E$ such that
	$
	f\left(x^{*}(e^{*})\right)
	=
	\min_{e\in E}
	f\left(x^{*}(e)\right),
	$
	then
	$
	x^{*}(e^{*})\in S^{*}.
	$
	In other words, the candidate clinical recommendation having the
	smallest clinical objective value among all evidence-assignment-based
	candidate recommendations is a globally optimal clinical
	recommendation for Problem~\eqref{eq_1}.
\end{theorem}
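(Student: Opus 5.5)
The plan is to combine Theorem~\ref{thm-1} with Theorem~\ref{thm-2} in the standard "finite union of regions" argument. First, Theorem~\ref{thm-2}(a) gives $x^{*}(e^{*})\in F^{*}\subseteq S(A^{+},A^{-},b)$, so the candidate is feasible. It then remains to show that $f(x^{*}(e^{*}))\le f(y)$ for every feasible $y$. This also shows that the minimum in the definition of $S^{*}$ is attained and equals $f(x^{*}(e^{*}))$.

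For the optimality inequality, fix an arbitrary $y\in S(A^{+},A^{-},b)$. By Theorem~\ref{thm-1}, $S(A^{+},A^{-},b)=\bigcup_{e\in E}S(e)$, so there exists $e_{y}\in E$ with $y\in S(e_{y})$. Theorem~\ref{thm-2}(b) states that $x^{*}(e_{y})$ is a global minimizer of $f$ over $S(e_{y})$. Hence
\[
f\left(x^{*}(e_{y})\right)\le f(y).
\]
By the choice of $e^{*}$,
\[
f\left(x^{*}(e^{*})\right)=\min_{e\in E}f\left(x^{*}(e)\right)\le f\left(x^{*}(e_{y})\right).
\]
Chaining the two inequalities gives $f(x^{*}(e^{*}))\le f(y)$ for all feasible $y$. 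So $\inf_{y\in S(A^{+},A^{-},b)}f(y)$ is attained at the feasible point $x^{*}(e^{*})$, and therefore $x^{*}(e^{*})\in S^{*}$.

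Two details need checking. First, $E$ must be nonempty and finite, so that the minimum over $E$ makes sense. Finiteness follows from $|E|\le\prod_{i}|\mathscr{J}_i|$ (Remark~\ref{rmk-2}). Nonemptiness follows from $S(A^{+},A^{-},b)\neq\varnothing$ together with Theorem~\ref{thm-1}: some $S(e)$ is nonempty, so some $e\in E$ exists. In any case the hypothesis already supplies $e^{*}$.

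Second, and this is the step I expect to be the main obstacle, we need $x^{*}(e)$ to be well defined. Each set $\bigcap_{i\in\mathscr{I}_j(e)}S_{ij}^{\prime}$ must be nonempty and compact, so that its $\min$ and $\max$ exist. Nonemptiness comes from Corollary~\ref{corl-3}. Compactness comes from the remark following Theorem~\ref{thm-1}, since each $S_{ij}^{\prime}$ is a finite union of closed intervals.

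Coordinatewise monotonicity also has to be applied correctly in Theorem~\ref{thm-2}(b). The right way is to move from $x$ to $x^{*}(e)$ one coordinate at a time, with every intermediate vector kept inside the product set $S(e)$. This ensures that each monotone step is legitimate. For that step I will rely on the statement of Theorem~\ref{thm-2}(b) as given, so the present proof reduces to the two-inequality chain above.
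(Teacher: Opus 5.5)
Your proposal is correct and follows essentially the same argument as the paper. You place an arbitrary feasible point in some region $S(e_0)$ via Theorem~\ref{thm-1}, use the region-wise optimality of $x^{*}(e_0)$ from Theorem~\ref{thm-2}(b), and chain this with the minimality of $e^{*}$; your explicit appeal to Theorem~\ref{thm-2}(a) for the feasibility of $x^{*}(e^{*})$, which the paper's proof leaves implicit, is a welcome addition.
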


\begin{proof}
	Let
	$
	x\in S\left(A^{+},A^{-},b\right)
	$
	be an arbitrary feasible clinical recommendation vector. By
	Theorem~\ref{thm-1}, there exists an admissible Clinical
	Evidence-Assignment Function $e_{0}\in E$ such that
	$
	x\in S(e_{0}).
	$
	By Theorem~\ref{thm-2}, $x^{*}(e_{0})$ is a global optimum of the
	clinical objective over $S(e_{0})$. Therefore,
	$
	f\left(x^{*}(e_{0})\right)\leq f(x).
	$
	Moreover, by the definition of $e^{*}$,
	$
	f\left(x^{*}(e^{*})\right)
	\leq
	f\left(x^{*}(e_{0})\right).
	$
	Combining these two inequalities gives
	$
	f\left(x^{*}(e^{*})\right)
	\leq
	f\left(x^{*}(e_{0})\right)
	\leq
	f(x).
	$
	Hence,
	$
	f\left(x^{*}(e^{*})\right)
	\leq f(x),
	~
	\forall x\in S\left(A^{+},A^{-},b\right).
	$
	Therefore,
	$
	f\left(x^{*}(e^{*})\right)
	=
	\min_{x\in S(A^{+},A^{-},b)}f(x),
	$
	which means that
	$
	x^{*}(e^{*})\in S^{*}.
	$
	Thus, $x^{*}(e^{*})$ is a globally optimal clinical recommendation
	for Problem~\eqref{eq_1}.
\end{proof}

Theorem~\ref{thm-2} shows that every vector
$x^{*}(e)$, $e\in E$, is a global optimum of the clinical objective
within its corresponding clinical recommendation region $S(e)$.
Therefore, the set
$
F^{*}
=
\left\{
x^{*}(e):e\in E
\right\}
$
contains the candidate clinical recommendations obtained from all
admissible Clinical Evidence-Assignment Functions.
Consequently, Theorem~\ref{thm-3} establishes that the global
optimization of the BFRE-based CDSS can be performed by comparing the
clinical objective values of the candidate recommendations in $F^{*}$.
In particular, if
$
e^{*}\in
\operatorname*{arg\,min}_{e\in E}
f\left(x^{*}(e)\right),
$
then
$
x^{*}(e^{*})\in S^{*}.
$
Thus, the globally optimal clinical recommendation is obtained from
$
x^{*}(e^{*})
\in
\operatorname*{arg\,min}_{x\in F^{*}} f(x),
$
and the corresponding global minimum clinical objective value is
$
f\left(x^{*}(e^{*})\right)
=
\min_{e\in E}f\left(x^{*}(e)\right)
=
\min_{x\in S(A^{+},A^{-},b)}f(x).
$

From the CDSS perspective, this result has a direct interpretation.
Each admissible Clinical Evidence-Assignment Function $e$ represents a
particular admissible pattern through which clinical criteria provide
effective evidence for the treatment alternatives. The corresponding
$x^{*}(e)$ is the best clinical recommendation under that particular
evidence-assignment pattern. The set $F^{*}$ therefore collects the
best candidate recommendations generated by all admissible patterns.
The final CDSS recommendation is obtained by comparing these candidates
according to the specified clinical objective and selecting the one
with the most favorable objective value. Hence, the BFRE formulation
provides a rigorous mechanism for reducing the search for a globally
optimal clinical recommendation to the comparison of the
evidence-assignment-based candidates in $F^{*}$.

\begin{example}\label{ex-3}
To demonstrate how the proposed BFRE-based Clinical Decision Support
System operates from the characterization of clinical admissibility to
the final optimization of the clinical recommendation, we now apply
Algorithm 1 to the CDSS introduced in
Example~\ref{ex-1}. In this example, the five treatment alternatives
$T_1,\ldots,T_5$ constitute the clinical requirements of the BFRE
system, while $C_1,\ldots,C_5$ represent the clinical criteria whose
recommendation levels are determined by the decision vector
$x=(x_1,\ldots,x_5)^T$. The positive and negative relationship matrices
encode, respectively, the clinical evidence supporting and opposing
each treatment-criterion association.
	\paragraph{Step 1.}
	\label{st-1}
	
	For each treatment alternative $T_i$ and clinical criterion $C_j$,
	the Clinical Admissibility Sets $I_{ij}$ and Bipolar Evidence
	Activation Sets $S_{ij}$ are first determined from the positive and
	negative clinical relationships. The resulting sets are shown in
	Tables~\ref{t-1} and~\ref{t-2}. Here, row $i$ corresponds to treatment
	alternative $T_i$, while column $j$ corresponds to clinical criterion
	$C_j$.
	
	\begin{table}[!ht]
		\centering
		\caption{Clinical Admissibility Sets $I_{ij}$ for the CDSS example.}
		\label{t-1}
		\renewcommand{\arraystretch}{1.2}
		\begin{tabular}{c|ccccc}
			\hline
			& $C_1$ & $C_2$ & $C_3$ & $C_4$ & $C_5$\\
			\hline
			$T_1$ & $[0,0.6]$ & $[0,1]$ & $[0,1]$ & $[0,1]$ & $[0,1]$\\
			$T_2$ & $[0,0.5]$ & $[0,1]$ & $[0.5,1]$ & $[0,1]$ & $[0,1]$\\
			$T_3$ & $[0,1]$ & $[0,1]$ & $[0,1]$ & $[0,1]$ & $[0,1]$\\
			$T_4$ & $[0,0.7]$ & $[0,1]$ & $[0.3,1]$ & $[0,1]$ & $[0,1]$\\
			$T_5$ & $[0.4,1]$ & $[0,1]$ & $[0,1]$ & $[0,1]$ & $[0.4,1]$\\
			\hline
		\end{tabular}
	\end{table}
	
	\begin{table}[!ht]
		\centering
		\caption{Bipolar Evidence Activation Sets $S_{ij}$ for the CDSS example.}
		\label{t-2}
		\renewcommand{\arraystretch}{1.2}
		\begin{tabular}{c|ccccc}
			\hline
			& $C_1$ & $C_2$ & $C_3$ & $C_4$ & $C_5$\\
			\hline
			$T_1$ & $\varnothing$ & $\varnothing$ & $[0.6,1]$ & $[0,0.4]$ & $\varnothing$\\
			$T_2$ & $\{0.5\}$ & $\varnothing$ & $\{0.5\}$ & $\varnothing$ & $\varnothing$\\
			$T_3$ & $\varnothing$ & $\varnothing$ & $\varnothing$ & $[0,0.2]$ & $[0.8,1]$\\
			$T_4$ & $\varnothing$ & $[0.7,1]$ & $\varnothing$ & $\varnothing$ & $\varnothing$\\
			$T_5$ & $\{0.4\}$ & $\varnothing$ & $\varnothing$ & $\varnothing$ & $\{0.4\}$\\
			\hline
		\end{tabular}
	\end{table}
	
	By intersecting the admissibility sets corresponding to each clinical
	criterion, Definition~\ref{def-4} gives the Clinical Admissibility
	Intervals
	$I_1=[0.4,0.5]$,
	$I_2=[0,1]$,
	$I_3=[0.5,1]$,
	$I_4=[0,1]$,
	$I_5=[0.4,1].$	
	The corresponding Effective Evidence Activation Sets
	$S_{ij}^{\prime}=S_{ij}\cap I_j$ are summarized in
	Table~\ref{t-4}.
	
	\begin{table}[!ht]
		\centering
		\caption{Effective Evidence Activation Sets $S_{ij}^{\prime}$ for the CDSS example.}
		\label{t-4}
		\renewcommand{\arraystretch}{1.2}
		\begin{tabular}{c|ccccc}
			\hline
			& $C_1$ & $C_2$ & $C_3$ & $C_4$ & $C_5$\\
			\hline
			$T_1$ & $\varnothing$ & $\varnothing$
			& $[0.6,1]$ & $[0,0.4]$ & $\varnothing$\\
			
			$T_2$ & $\{0.5\}$ & $\varnothing$
			& $\{0.5\}$ & $\varnothing$ & $\varnothing$\\
			
			$T_3$ & $\varnothing$ & $\varnothing$
			& $\varnothing$ & $[0,0.2]$ & $[0.8,1]$\\
			
			$T_4$ & $\varnothing$ & $[0.7,1]$
			& $\varnothing$ & $\varnothing$ & $\varnothing$\\
			
			$T_5$ & $\{0.4\}$ & $\varnothing$
			& $\varnothing$ & $\varnothing$ & $\{0.4\}$\\
			\hline
		\end{tabular}
	\end{table}
		
	\paragraph{Step 2.}
	\label{st-2}
	
	From the above Clinical Admissibility Intervals, we have
	$
	I_j\neq\varnothing,~ \forall j\in\mathscr{J}.
	$
	Hence, the first necessary feasibility condition is satisfied: every
	clinical criterion possesses at least one recommendation level that is
	compatible with all treatment requirements.
	
	\paragraph{Step 3.}	\label{st-3}
	
	Table~\ref{t-4} shows that every treatment alternative has at least one
	clinical criterion whose Effective Evidence Activation Set is
	nonempty. More precisely,
	$\mathscr{J}_1=\{3,4\}$,
	$\mathscr{J}_2=\{1,3\}$,
	$\mathscr{J}_3=\{4,5\}$,
	$\mathscr{J}_4=\{2\}$,
	$\mathscr{J}_5=\{1,5\}.$
	Thus, each treatment requirement can be satisfied by at least one
	effective clinical criterion, and the second necessary feasibility
	condition is also satisfied. Consequently, the BFRE-based CDSS
	problem is feasible.
	
	\paragraph{Step 4.}	\label{st-4}
	For the present CDSS instance, the five simplification rules introduced
	in the paper~\cite{ref_63} do not yield any reduction of the
	BFRE system. Consequently, the original five-treatment, five-criterion
	structure is retained for the subsequent analysis.
	
	Before imposing the common-value compatibility condition associated
	with the Effective Evidence Activation Sets, the number of possible
	Clinical Evidence-Assignment Functions is bounded by
	$
	\prod_{i\in\mathscr{I}}|\mathscr{J}_i|
	=
	2\times2\times2\times1\times2
	=16.
	$
	However, not all of these assignments are admissible. Whenever two or
	more treatment requirements are assigned to the same clinical
	criterion, their corresponding Effective Evidence Activation Sets must
	have a nonempty intersection. This condition ensures the existence of
	a common recommendation level that simultaneously satisfies all
	associated bipolar clinical relationships.
	
	\paragraph{Step 5.}	\label{st-5}
	Applying the compatibility conditions in Definition~\ref{def-6} to the
	sixteen possible assignments leaves five admissible
	Clinical Evidence-Assignment Functions. They can be represented by
	the following vectors:
	$e_1=(3,1,4,2,5)$,
	$e_2=(4,1,4,2,5)$,
	$e_3=(4,3,4,2,1)$,
	$e_4=(4,3,4,2,5)$,
	$e_5=(4,3,5,2,1).$
	Here, the $i$-th component of $e_k$ specifies the clinical criterion
	assigned to the $i$-th treatment requirement.
	For example, under $e_4=(4,3,4,2,5)$, treatment requirements
	$T_1$ and $T_3$ both rely on criterion $C_4$. Since
	$
	S_{14}^{\prime}=[0,0.4]
	~\text{and}~
	S_{34}^{\prime}=[0,0.2],
	$
	their common admissible recommendation levels are
	$
	S_{14}^{\prime}\cap S_{34}^{\prime}
	=[0,0.2]\neq\varnothing.
	$
	Thus, the assignment is clinically compatible.
	In contrast, assignments that associate treatment requirements with
	the same criterion but produce disjoint Effective Evidence Activation
	Sets are excluded, since no single recommendation level could
	simultaneously activate the required bipolar clinical relationships.
	
	\paragraph{Step 6.}	\label{st-6}
	
	For each admissible assignment function $e_k$, Definition~\ref{def-7}
	gives a corresponding Clinical Requirement Satisfaction Set
	$S(e_k)$. For the present example, these sets are
	
	\[
	\begin{aligned}
		S(e_1)
		={}&
		\{0.5\}\times[0.7,1]\times[0.6,1]
		\times[0,0.2]\times\{0.4\},\\[2mm]
		S(e_2)
		={}&
		\{0.5\}\times[0.7,1]\times[0.5,1]
		\times[0,0.2]\times\{0.4\},\\[2mm]
		S(e_3)
		={}&
		\{0.4\}\times[0.7,1]\times\{0.5\}
		\times[0,0.2]\times[0.4,1],\\[2mm]
		S(e_4)
		={}&
		[0.4,0.5]\times[0.7,1]\times\{0.5\}
		\times[0,0.2]\times\{0.4\},\\[2mm]
		S(e_5)
		={}&
		\{0.4\}\times[0.7,1]\times\{0.5\}
		\times[0,0.4]\times[0.8,1].
	\end{aligned}
	\]
	
	Consequently, by Theorem~\ref{thm-1}, the complete feasible clinical
	recommendation set is
	$
	S(A^{+},A^{-},b)
	=
	\bigcup_{k=1}^{5}S(e_k).
	$
	Thus, the BFRE resolution procedure does not produce a single
	recommendation at this stage. Rather, it identifies all clinically
	admissible recommendation vectors that simultaneously satisfy the
	bipolar fuzzy relational requirements encoded in the CDSS.
	
	\paragraph{Step 7.}	\label{st-7}
	
	The final stage of the CDSS procedure is the optimization of the
	clinical recommendation objective over the feasible clinical
	recommendation set. In this example, the clinical preference/cost
	vector is specified as
	$
	c=(1,3,-1,-2,3)^{T},
	$
	and hence the clinical recommendation objective is
	$
	f(x)=c^{T}x
	=x_{1}+3x_{2}-x_{3}-2x_{4}+3x_{5}.
	$
	The coefficients describe the relative contribution of the clinical
	criteria to the overall recommendation objective. Since the problem
	is formulated as a minimization problem, a positive coefficient
	indicates that increasing the recommendation level of the corresponding
	clinical criterion increases the objective value, whereas a negative
	coefficient indicates that increasing that recommendation level
	decreases the objective value. Consequently,
	$
	\mathscr{J}^{+}=\{1,2,5\},
	~
	\mathscr{J}^{-}=\{3,4\}.
	$
	Thus, the recommendation levels associated with $C_{1}$, $C_{2}$,
	and $C_{5}$ are minimized within their admissible ranges, whereas
	those associated with $C_{3}$ and $C_{4}$ are maximized when constructing
	the candidate optimal recommendations.
	
	For each admissible Clinical Evidence-Assignment Function
	$e\in E$, Definition~\ref{def-8} therefore determines a candidate
	clinical recommendation $x^{*}(e)$ by selecting the appropriate
	lower or upper admissible levels according to the clinical preference
	structure encoded in $c$. The resulting finite collection
	$
	F^{*}
	=
	\left\{
	x^{*}(e):e\in E
	\right\}
	$
	contains all candidate region-wise optimal clinical recommendations
	generated by the admissible evidence assignments. By
	Theorem~\ref{thm-2}, every
	$x^{*}(e)\in F^{*}$ is a feasible locally optimal solution.
	
	The global clinical recommendation is then obtained by comparing the
	clinical objective values of these candidate recommendations. In
	particular, let
	$
	e^{*}\in
	\operatorname*{arg\,min}_{e\in E}
	f\left(x^{*}(e)\right).
	$
	Then, by Theorem~\ref{thm-3},
	$
	x^{*}(e^{*})
	$
	is a globally optimal clinical recommendation, and the corresponding
	global clinical preference value is
	$
	f\left(x^{*}(e^{*})\right)
	=
	\min_{e\in E}
	f\left(x^{*}(e)\right).
	$
	
	Accordingly, the CDSS does not merely identify recommendation vectors
	that satisfy the bipolar clinical requirements. It first uses the
	BFRE constraints to determine the clinically feasible recommendation
	space and then uses the clinical preference/cost vector $c$ to select
	the most preferred feasible recommendation. In this way, the BFRE
	system provides the underlying mathematical mechanism for translating
	bipolar clinical relationships into admissible recommendation levels,
	while the objective function expresses the CDSS preference among those
	clinically admissible alternatives.
	
	Consequently, the five admissible evidence-assignment functions generate
	the candidate clinical recommendations
	$$
	\begin{aligned}
		x^{*}(e_1)&=(0.5,0.7,1,0.2,0.4),&
		f(x^{*}(e_1))&=2.4,\\
		x^{*}(e_2)&=(0.5,0.7,1,0.2,0.4),&
		f(x^{*}(e_2))&=2.4,\\
		x^{*}(e_3)&=(0.4,0.7,0.5,0.2,0.4),&
		f(x^{*}(e_3))&=2.8,\\
		x^{*}(e_4)&=(0.4,0.7,0.5,0.2,0.4),&
		f(x^{*}(e_4))&=2.8,\\
		x^{*}(e_5)&=(0.4,0.7,0.5,0.4,0.8),&
		f(x^{*}(e_5))&=3.6.
	\end{aligned}
	$$
	Thus,
	$
	\min_{e\in E} f(x^{*}(e))=2.4,
	$
	attained for $e_1$ and $e_2$. Since
	$
	x^{*}(e_1)=x^{*}(e_2)
	=(0.5,0.7,1,0.2,0.4),
	$
	the resulting globally optimal clinical recommendation is
	$x^{*}=(0.5,0.7,1,0.2,0.4),
	$
	with global minimum clinical objective value
	$f(x^{*})=2.4.
	$
	
\end{example}
\section*{Conclusion}\label{sec_con}
This paper developed a bipolar fuzzy relational optimization framework for clinical decision support in which positive and negative clinical relationships are incorporated simultaneously within a unified recommendation process. The clinical information entering the model is represented through the fuzzy relational matrices \(A^{+}\) and \(A^{-}\) together with the fuzzy requirement vector \(b\), providing a compact graded representation of the relational knowledge on which the BFRE constraints operate.

For the max--min formulation, Clinical Admissibility Intervals and Effective Evidence Activation Sets were introduced to provide a componentwise characterization of feasibility. Building on these structures, the complete feasible recommendation set was represented as a finite union of regions induced by admissible Clinical Evidence-Assignment Functions. This representation makes it possible to describe the potentially nonconvex BFRE solution set through a finite family of clinically interpretable recommendation regions. Under the assumed continuity and coordinatewise monotonicity of the objective function, a region-wise optimal candidate can be constructed for each admissible evidence assignment, and comparison of the resulting finite collection of candidates yields a globally optimal recommendation. These results were integrated into an algorithm that combines feasibility analysis, structural reduction, construction of the feasible region, region-wise optimization, and global selection.

The numerical example illustrated the complete decision process, from bipolar fuzzy relational data to the characterization of admissible recommendation regions and the identification of a globally optimal relational decision vector. More broadly, the proposed framework shows how BFRE optimization can provide a rigorous mathematical mechanism for decision-support problems in which graded favorable and unfavorable relationships must be considered simultaneously. The finite-region representation and evidence-assignment structure further provide a transparent description of how feasible and preferred recommendations are generated from the underlying bipolar relational information. These properties provide a natural foundation for further application-oriented investigations involving clinically derived fuzzy relational data, larger-scale decision-support instances, and systematic analyses of the robustness and sensitivity of the resulting recommendations.

\printbibliography
\end{document}